\providecommand{\U}[1]{\protect\rule{.1in}{.1in}}
\newtheorem{theorem}{Theorem}
\newtheorem{corollary}[theorem]{Corollary}
\newtheorem{definition}[theorem]{Definition}
\newtheorem{example}[theorem]{Example}
\newtheorem{lemma}[theorem]{Lemma}
\newtheorem{proposition}[theorem]{Proposition}
\newtheorem{remark}[theorem]{Remark}
\newenvironment{proof}[1][Proof]{\noindent\textbf{#1.} }{\ \rule{0.5em}{0.5em}}
\begin{document}

\title{A characterization of $\mu-$equicontinuity for topological dynamical systems}
\author{Felipe Garc\'{\i}a-Ramos\\felipegra@math.ubc.ca\\University of British Columbia}
\maketitle

\begin{abstract}
Two different notions of measure theoretical equicontinuity ($\mu
-$equicontinuity) for topological dynamical systems with respect to Borel
probability measures appeared in \cite{Gilman1} and \ \cite{mtequicontinuity}.
We show that if the probability space satisfies Lebesgue's density theorem and
Vitali's covering theorem (for example a Cantor set or a subset of
$\mathbb{R}^{d}$) then both notions are equivalent. To show this we
characterize Lusin measurable maps using $\mu-$continuity points. As a
corollary we obtain new characterizations of $\mu-$pairwise sensitivity
($\mu-$sensitivity) for ergodic TDS. We also introduce a subfamily of $\mu
-$equicontinuous systems defined by locally periodic behaviour.

\end{abstract}
\tableofcontents

\section{Introduction}

A topological dynamical system (TDS), $(X,T)$, is a continuous transformation
$T$ on a compact metric space $X$. A class of rigid systems is the family of
equicontinuous TDS. A TDS is equicontinuous if the family $\{T^{i}\}$ is
equicontinuous. Equivalently we can define equicontinuity using equicontinuity
points: $x\in X$ is an equicontinuity point if \textit{for every} $y$ close to
$x$ we have that $T^{i}(x),T^{i}(y)$ stay close for all $i$. A TDS is
equicontinuous if and only if every point is equicontinuous. Sensitivity (or
sensitivity to initial conditions) is considered a weak form of chaos.
Akin-Auslander-Berg~\cite{AkinAuslander} showed that a minimal TDS is either
sensitive or equicontinuous. .

Equicontinuity is a very strong property and different attempts have been made
to weaken this property. We focus on the ones made with the use of Borel
probability measures.

While studying cellular automata (a subclass of TDS), Gilman \cite{Gilman1}%
\cite{Gilman2} introduced the concept of $\mu-$equicontinuity points: $x\in X$
is a $\mu-$equicontinuity point if for \textit{most }$y$ close to $x$ we have
that $T^{i}(x),T^{i}(y)$ stay close for all $i\in\mathbb{Z}_{+}$. He also
defined $\mu-$sensitivity and he showed that if $T$ is a cellular automaton
and $\mu$ a shift$-$ergodic measure (not necessarily invariant with respect to
$T$) then $T$ is either $\mu-$sensitive or the set of $\mu-$equicontinuity
points has measure one. With exactly the same approach one can define $\mu
-$equicontinuity points for TDS with respect to measures that are not
necessarily ergodic or invariant under any transformation.

In \cite{Cadre2005375}, Cadre-Jacob introduced $\mu-$paiwise sensitivity
($\mu-$sensitivity) for TDS. This notion was characterized for ergodic TDS by
Huang-Li-Ye in \cite{mtequicontinuity}. It was shown that an ergodic TDS is
either $\mu-$pairwise sensitive or $\mu-$equicontinutous; $(X,T)$ is $\mu
-$equicontinuous if for every $\varepsilon>0$ there exists a compact set $M$
such that $\mu(M)>1-\varepsilon$ and $T\shortmid_{M}$is equicontinuous. They
showed that every ergodic $\mu-$equicontinuous TDS has discrete spectrum.

We show that if $(X,\mu)$ satisfies Lebesgue's density theorem and Vitali's
covering theorem (for example when $X\subset\mathbb{R}^{d}$ or when $X$ is a
Cantor space) then $(X,T)$ is $\mu-$equicontinuous if and only if almost every
point is a $\mu-$equicontinuity point (Theorem \ref{3i1}). As a consequence we
get that if $\mu$ is ergodic then $(X,T)$ is $\mu-$pairwise sensitive if and
only if there are no $\mu-$equicontinuity points (Theorem \ref{musens}). We
also define a subclass of $\mu-$equicontinuous systems that rely on a local
periodicity notion and we prove they have discrete rational spectrum when
$\mu$ is ergodic.

In Section 2 we define and study topological and measure theoretical forms of
equicontinuity and local periodicity ($\mu-$equicontinuity and $\mu-LP$). To
characterize $\mu-$equicontinuity we study $\mu-$continuity, which is a
property of functions between metric measure spaces (not a dynamical concept).
In Section 3, we discuss $\mu-$equicontinuity with respect to invariant
measures; we show ergodic $\mu-LP$ TDS on Cantor spaces have rational spectrum
and we characterize $\mu-$sensitivity.

The characterization of $\mu-$continuity (Theorem \ref{tfae} ) also helps
characterize other weaker forms of measure theoretical equicontinuity. In
\cite{weakeq}, $\mu-$mean equicontinuity is introduced; Theorem \ref{tfae} (of
this paper) is applied to characterize $\mu-$mean equicontinuity (Theorem 21
in \cite{weakeq}), and it is shown that ergodic TDS are $\mu-$mean
equicontinuous if and only if they have discrete spectrum.

We remark that Theorem \ref{3i1} holds for any continuous semigroup action on
a compact metric space (in particular $\mathbb{Z}_{+}^{d}-$actions and
$\mathbb{R-}$flows). See comment after Theorem \ref{3i1}.

\section{Equicontinuity and local periodicity}

Throughout this paper $X$ will denote a compact metric space with metric
$d$.\ For $x\in X$ we represent the\textbf{\ balls centered in }$x$ with
$B_{n}(x)\mathbf{:}=\left\{  z\mid d(x,z)\leq1/n\right\}  $.

A \textbf{topological dynamical system (TDS)} is a pair $(X,T)$ where $X$ is a
compact metric space and $T:X\rightarrow X$ is a continuous transformation.

Two TDS $(X_{1},T_{2}),$ $(X_{2},T_{2})$ are \textbf{conjugate
(topologically)} if there exists a homeomorphism $f:X_{1}\rightarrow X_{2}$
such that $T_{2}\circ f=f\circ T_{1}$.

\bigskip An important kind of TDS are TDS on Cantor spaces.

The\textbf{\ }$n-$\textbf{window}, $W_{n}\subset$ $\mathbb{Z}_{+}$ is defined
as the interval of radius $n$ centred at the origin; a \textbf{window}%
\textit{\ }is an $n-$window for some $n$. Let $\mathcal{A}$ be a finite set.
For $W\subset\mathbb{Z}_{+}$ and $x\in X,$ $x_{W}\in$ $\mathcal{A}%
^{\mathbb{\ }\left\vert W\right\vert }$ denotes the restriction of $x$ to $W.$
The set $\mathcal{A}^{\mathbb{Z}_{+}}$ endowed with the Cantor metric ( given
by $d(x,y)=\frac{1}{m},$ where $m$ is the largest integer such that $x_{W_{m}%
}=y_{W_{m}}$) is called a \textbf{Cantor space}. It is known that Cantor
spaces are zero dimensional, i.e. there exists a countable base of clopen sets.

Another important class of systems are Euclidean TDS. We endow the set
$\mathbb{R}^{d}$ with the Euclidean metric. We say $(X,T)$ is
\textbf{Euclidean} if $X\subset\mathbb{R}^{d}.$

\subsection{Equicontinuity}

Mathematical definitions of chaos have been widely studied. Many of them
require the system to be sensitive to initial conditions. A TDS $(X,T)$ is
\textbf{sensitive (or sensitive to initial conditions)} if there exists
$\varepsilon>0$ such that for every open set $A\subset X$ there exist $x,y\in
A$ and $i\in$ $\mathbb{Z}_{+}$ such that $d(T^{i}x,T^{i}y)>\varepsilon.$ On
the other hand, equicontinuity represents predictable behaviour. As we
mentioned a TDS is equicontinuous if $\left\{  T^{i}\right\}  $ is an
equicontinuous family, and a minimal TDS is either sensitive or equicontinuous
\cite{AkinAuslander}.

\bigskip

We can also define equicontinuity by defining equicontinuity points.

\begin{definition}
Let $(X,T)$ be a TDS. We define the \textbf{orbit metric} $d_{o}$ on $X$ as
$d_{o}(x,y):=\sup_{i\geq0}\left\{  d(T^{i}x,T^{i}y)\right\}  ,$ and the
\textbf{orbit balls} as
\begin{align*}
B_{m}^{o}(x)  &  =\left\{  y\mid d_{o}(x,y)\leq1/m\right\} \\
&  =\left\{  y\mid d(T^{i}(x),T^{i}(y)\leq1/m\text{ }\forall i\in
\mathbb{N}\right\}  .
\end{align*}

Let $(X,T)$ be a TDS. A point $x$ is an \textbf{equicontinuity point} of $T$
if for every $m\in\mathbb{N}$ there exists $n\in\mathbb{N}$ such that
$B_{n}(x)\subset B_{m}^{o}(x)$.
\end{definition}

A TDS $(X,T)$ is equicontinuous if and only if every $x\in X$ is an
equicontinuity point. A TDS is equicontinuous if and only if it is uniformly
equicontinuous, i.e. for all $m\in\mathbb{N}$ there exists $n\in\mathbb{N}$
such that $B_{n}(x)\subset B_{m}^{o}(x)$ for every $x\in X.$

\begin{definition}
Let $M\subset X.$ We say $T\shortmid_{M}$is \textbf{equicontinuous} if for all
$x\in M$ and $m\in\mathbb{N}$ there exists $n\in\mathbb{N}$ such that
$B_{n}(x)\cap M\subset B_{m}^{o}(x).$
\end{definition}

\subsection{$\mu-$Equicontinuity}

We will use $\mu$ to denote Borel probability measures on $X$ ($\mu$ does not
need to be invariant under $T$).

Basedo on \cite{Gilman1} we define $\mu-$equicontinuity points.

\begin{definition}
A point $x$ is a $\mu-$\textbf{equicontinuity }point\textbf{\ }of $(X,T)$ if
for all $m\in\mathbb{N}$ we have%
\[
\lim_{n\rightarrow\infty}\frac{\mu(B_{m}^{o}(x)\cap B_{n}(x))}{\mu(B_{n}%
(x))}=1.
\]

\end{definition}

If $x$ is an equicontinuity point in the support of $\mu$ then $x$ is a $\mu
-$equicontinuity point.

The following definition appeared in \cite{mtequicontinuity}.

\begin{definition}
A TDS $(X,T)$ is $\mu-$\textbf{equicontinuous} if for every $\varepsilon>0$
there exists a compact set $M$ such that $\mu(M)>1-\varepsilon$ and
$T\shortmid_{M}$is equicontinuous.
\end{definition}

If $(X,T)$ is an equicontinuous TDS and $\mu$ a Borel probability measure then
$(X,T)$ is $\mu-$equicontinuous. In \cite{mueqca} it was shown that there
exists a sensitive TDS on $\left\{  0,1\right\}  ^{\mathbb{Z}}$ such that
$(X,T)$ is $\mu-$equicontinuous for every ergodic Markov chain $\mu.$

\bigskip

\begin{definition}
We say $(X,\mu)$ satisfies \textbf{Lebesgue's density theorem} if for every
Borel set $A$ we have that%
\begin{equation}
\lim_{n\rightarrow\infty}\frac{\mu(A\cap B_{n}(x))}{\mu(B_{n}(x))}=1\text{
\ for a.e. }x\in A.
\end{equation}

\end{definition}

The original Lebesgue's density theorem applies to $\mathbb{R}^{d}$ and the
Lebesgue measure. If $X\subset\mathbb{R}^{d}$ and $\mu$ is a Borel probability
measure then $(X,\mu)$ satisfies Lebesgue's density theorem ( see Remark 2.4
in \cite{kaenmaki2010local}).

\begin{theorem}
[Levy's zero-one law \cite{durrett2010probability} pg.262]\label{levy}\bigskip
Let $\Sigma$ be a sigma-algebra on a set $\Omega$ and $P$ a probability
measure. Let $\left\{  \mathcal{F}_{n}\right\}  \subset\Sigma$ be a filtration
of sigma-algebras, that is, a sequence of sigma-algebras $\left\{
\mathcal{F}_{n}\right\}  $, such that $\mathcal{F}_{i}\subset\mathcal{F}%
_{i+1}$ for all $i$. If $D$ is an event in $\mathcal{F}_{\infty}$ (the
smallest sigma-algebra that contains every $\mathcal{F}_{n}$) then
$P(D\mid\mathcal{F}_{n})\rightarrow1_{D}$ almost surely.
\end{theorem}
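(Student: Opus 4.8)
The plan is to derive Levy's zero-one law from Doob's martingale convergence theorem. First I would set $M_{n}:=P(D\mid\mathcal{F}_{n})=E[\mathbf{1}_{D}\mid\mathcal{F}_{n}]$ and check that $\{M_{n}\}$ is a martingale adapted to the filtration $\{\mathcal{F}_{n}\}$: since $\mathcal{F}_{n}\subset\mathcal{F}_{n+1}$, the tower property gives $E[M_{n+1}\mid\mathcal{F}_{n}]=E[\mathbf{1}_{D}\mid\mathcal{F}_{n}]=M_{n}$. Moreover $0\leq M_{n}\leq1$, so the martingale is bounded in $L^{\infty}$ and in particular uniformly integrable.

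By Doob's martingale convergence theorem, $M_{n}$ converges almost surely to some limit $M_{\infty}$, and by uniform integrability the convergence also holds in $L^{1}$. As an almost sure limit of $\mathcal{F}_{\infty}$-measurable functions, $M_{\infty}$ is $\mathcal{F}_{\infty}$-measurable. The next step is to identify $M_{\infty}$ with $E[\mathbf{1}_{D}\mid\mathcal{F}_{\infty}]$. For a fixed $k$ and any $A\in\mathcal{F}_{k}$, the martingale property yields $\int_{A}M_{n}\,dP=\int_{A}M_{k}\,dP=\int_{A}\mathbf{1}_{D}\,dP$ for all $n\geq k$; letting $n\to\infty$ and using $L^{1}$ convergence gives $\int_{A}M_{\infty}\,dP=\int_{A}\mathbf{1}_{D}\,dP$. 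Thus the two bounded measures $A\mapsto\int_{A}M_{\infty}\,dP$ and $A\mapsto\int_{A}\mathbf{1}_{D}\,dP$ agree on the algebra $\bigcup_{k}\mathcal{F}_{k}$, hence (by a $\pi$–$\lambda$ argument) on $\mathcal{F}_{\infty}=\sigma\bigl(\bigcup_{k}\mathcal{F}_{k}\bigr)$, so $M_{\infty}=E[\mathbf{1}_{D}\mid\mathcal{F}_{\infty}]$ almost surely. Finally, because $D\in\mathcal{F}_{\infty}$, the function $\mathbf{1}_{D}$ is already $\mathcal{F}_{\infty}$-measurable, so $E[\mathbf{1}_{D}\mid\mathcal{F}_{\infty}]=\mathbf{1}_{D}$ almost surely, and therefore $P(D\mid\mathcal{F}_{n})=M_{n}\to\mathbf{1}_{D}$ almost surely.

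The only step requiring genuine care is the extension from the algebra $\bigcup_{k}\mathcal{F}_{k}$ to the generated $\sigma$-algebra $\mathcal{F}_{\infty}$, which calls for a monotone class / Dynkin system argument rather than a direct computation; the rest of the argument is a packaging of the martingale convergence theorem, which I would cite rather than reprove.
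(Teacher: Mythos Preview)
The paper does not prove this statement at all: Theorem~\ref{levy} is quoted as Levy's zero-one law with a reference to Durrett's textbook, and the paper immediately uses it (in Corollary~\ref{lebesgue}) without supplying an argument. So there is nothing to compare against in the paper itself.

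Your argument is correct and is in fact the standard proof (essentially the one in Durrett): one shows that $M_{n}=E[\mathbf{1}_{D}\mid\mathcal{F}_{n}]$ is a bounded, hence uniformly integrable, martingale; applies Doob's convergence theorem to get an a.s.\ and $L^{1}$ limit $M_{\infty}$; identifies $M_{\infty}$ with $E[\mathbf{1}_{D}\mid\mathcal{F}_{\infty}]$ by checking integrals over the $\pi$-system $\bigcup_{k}\mathcal{F}_{k}$ and extending via a monotone class argument; and concludes using $D\in\mathcal{F}_{\infty}$. Each step is sound, and your flagging of the $\pi$--$\lambda$ extension as the only genuinely delicate point is accurate.
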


\begin{corollary}
\label{lebesgue}Let $X$ be a Cantor space and $\mu$ a Borel probability
measure. Then $(X,\mu)$ satisfies Lebesgue's density theorem.
\end{corollary}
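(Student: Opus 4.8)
The plan is to deduce this from Levy's zero-one law (Theorem \ref{levy}), using the fact that on a Cantor space the balls $B_n(x)$ are exactly cylinder sets and that the density ratio $\mu(A\cap B_n(x))/\mu(B_n(x))$ is precisely a conditional expectation with respect to a natural finite filtration.

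First I would record the identification of balls with cylinders: writing $X=\mathcal{A}^{\mathbb{Z}_+}$ with the Cantor metric, an easy computation with the metric shows that for every $x\in X$ and every $n$,
\[
B_n(x)=\{z\in X\mid z_{W_n}=x_{W_n}\},
\]
which is clopen, and these sets decrease as $n$ grows. Let $\mathcal{F}_n$ be the (finite) sigma-algebra generated by the partition of $X$ into the cylinders $\{z\mid z_{W_n}=w\}$, $w\in\mathcal{A}^{|W_n|}$. Since $W_n\subset W_{n+1}$, the sequence $\{\mathcal{F}_n\}$ is a filtration; and since the cylinders form a countable base of clopen sets for $X$, every open (hence every Borel) subset of $X$ lies in $\mathcal{F}_\infty$, so $\mathcal{F}_\infty$ is the Borel sigma-algebra of $X$.

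Next I would fix a Borel set $A$ and discard the $\mu$-null set $N:=\bigcup_n\bigcup\{\,[w]\mid w\in\mathcal{A}^{|W_n|},\ \mu([w])=0\,\}$. For $x\notin N$ the atom of $\mathcal{F}_n$ containing $x$ is exactly $B_n(x)$ and has positive measure, so the standard formula for conditional expectation with respect to a sigma-algebra generated by a countable partition gives
\[
\mathbb{E}_\mu(1_A\mid\mathcal{F}_n)(x)=\frac{\mu(A\cap B_n(x))}{\mu(B_n(x))}.
\]
Applying Theorem \ref{levy} with $D=A$ yields $\mathbb{E}_\mu(1_A\mid\mathcal{F}_n)\to 1_A$ $\mu$-almost surely; restricting attention to points of $A$ gives $\mu(A\cap B_n(x))/\mu(B_n(x))\to 1$ for $\mu$-a.e. $x\in A$, which is exactly the asserted Lebesgue density property.

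The argument is essentially bookkeeping once the ball-equals-cylinder identification is in place. The only points that need a little care are excising the null set on which some $\mu(B_n(x))$ vanishes, so that the conditional-expectation formula is literally valid, and checking that the atoms of $\mathcal{F}_n$ through a point $x$ are precisely the balls $B_n(x)$ (rather than smaller or coarser cylinders); neither is a real obstacle.
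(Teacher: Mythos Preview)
Your argument is correct and follows essentially the same route as the paper: define the filtration $\mathcal{F}_n$ generated by the balls/cylinders, observe that $\mathcal{F}_\infty$ is the Borel $\sigma$-algebra, and apply Levy's zero-one law. You have simply spelled out the details (the ball--cylinder identification, the conditional-expectation formula on the atoms, and the excision of the null set where $\mu(B_n(x))=0$) that the paper leaves implicit in the phrase ``direct application.''
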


\begin{proof}
Let $n\in\mathbb{N}$, and $\mathcal{F}_{n}$ the smallest sigma-algebra that
contains all the balls $\left\{  B_{n^{\prime}}(x)\right\}  $ with $n^{\prime
}\leq n$ $.$ It is easy to see that $\mathcal{F}_{\infty}$ is the Borel
sigma-algebra on $X.$ The desired result is a direct application from Theorem
\ref{levy}.
\end{proof}

There exist Borel probability spaces that do not satisfy Lebesgue's density
theorem (e.g. Example 5.6 in \cite{kaenmaki2010local}).

\bigskip

\begin{definition}
We say $(X,\mu)$ is \textbf{Vitali (}or \textbf{satifies Vitali's covering
theorem) }if for every Borel set $A\subset M$ and every $N,\varepsilon>0,$
there exists a finite subset $F\subset$ $A$ and $n_{x}\geq N$ (defined for
every $x\in F)$ such that $\left\{  B_{n_{x}}(x)\right\}  _{x\in F}$ are
disjoint and $\mu(A\diagdown\cup_{x\in F}B_{n_{x}}(x))\leq\varepsilon.$
\end{definition}

If $X\subset\mathbb{R}^{d}$ and $\mu$ is a Borel probability measure then
$(X,\mu)$ is Vitali (\cite{heinonen2001lectures} pg.8). This result is known
as Vitali's covering theorem.

Using a clopen base it is not hard to see that if $X$ is a Cantor space and
$\mu$ a Borel probability measure then $(X,\mu)$ satisfies Vitali's covering
theorem. For more conditions see Chapter 1 in \cite{heinonen2001lectures}.

The following theorem will be proved at the end of the following subsection.

\begin{theorem}
\label{3i1}Let $(X,T)$ be a TDS and $\mu$ a Borel probability measure.
Consider the following properties:

1) $(X,T)$ is $\mu-$equicontinuous.

2) Almost every $x\in X$ is a $\mu-$equicontinuity point.

3) There exists $X^{\prime}\subset X$ such that $\mu(X^{\prime})=1$ and
$(X^{\prime},d_{0})$ is separable

We have that $1)\Longleftrightarrow3.$

If $(X,\mu)$ is Vitali then $2)\Rightarrow1).$

If $(X,\mu)$ satisfies Lebesgue's density theorem then $1)\Rightarrow2).$
\end{theorem}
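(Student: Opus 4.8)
The three implications require genuinely different ideas, so I will treat them separately. For $1)\Leftrightarrow 3)$ the point is that $\mu$-equicontinuity is, up to a null set, exactly separability of the orbit metric. If $(X,T)$ is $\mu$-equicontinuous, take compact sets $M_k$ with $\mu(M_k)>1-1/k$ on which $T$ is equicontinuous; on each $M_k$ the orbit metric $d_o$ induces the same topology as $d$ (equicontinuity is precisely the statement that the identity $(M_k,d)\to(M_k,d_o)$ is continuous, and the reverse inequality $d\le d_o$ is automatic), so $(M_k,d_o)$ is a continuous image of a compact metric space, hence separable; then $X'=\bigcup_k M_k$ has full measure and $(X',d_o)$ is separable (a countable union of separable spaces). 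Conversely, if $(X',d_o)$ is separable with $\mu(X')=1$, cover $X'$ by countably many $d_o$-balls of radius $1/m$ for each $m$; by a diagonal/Egorov-type argument one extracts, for every $\varepsilon>0$, a compact $M$ with $\mu(M)>1-\varepsilon$ such that the $d_o$-topology and the $d$-topology agree on $M$ (i.e.\ on $M$, $d$-closeness forces $d_o$-closeness uniformly in the sense required), which is exactly equicontinuity of $T\shortmid_M$.

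For $2)\Rightarrow 1)$ under the Vitali hypothesis: fix $\varepsilon>0$ and $m\in\mathbb N$, and let $A$ be the full-measure set of $\mu$-equicontinuity points. For $x\in A$, $\mu(B^o_m(x)\cap B_n(x))/\mu(B_n(x))\to 1$, so for each $x$ there is $n_x$ (which I can demand $\ge N$ for any prescribed $N$) with $\mu(B^o_m(x)\cap B_{n_x}(x))\ge (1-\delta)\mu(B_{n_x}(x))$, for a $\delta$ to be chosen. Apply Vitali's covering theorem to the family $\{B_{n_x}(x)\}_{x\in A}$ to get a finite disjoint subfamily covering all but $\varepsilon/2$ of $A$. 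On $\bigcup B_{n_x}(x)$, the "bad" set where points leave $B^o_m(x)$ has measure at most $\delta\sum\mu(B_{n_x}(x))\le\delta$, so choosing $\delta$ small and intersecting over the countably many values of $m$ (with $\varepsilon\,2^{-m}$ budgets) produces a set $M_0$ of measure $>1-\varepsilon$; then take $M\subset M_0$ compact of almost the same measure. The content of equicontinuity on $M$ is that every $z\in M$ lies in some $B_{n_x}(x)$ with $z\in B^o_m(x)$, so $d_o(z,x)\le 1/m$, and combining two such (triangle inequality for $d_o$) controls $d_o$ on a neighborhood of $z$ in $M$ — this requires a little care with constants but is routine.

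For $1)\Rightarrow 2)$ under Lebesgue's density theorem: let $M_k$ be compact with $\mu(M_k)>1-1/k$ and $T\shortmid_{M_k}$ equicontinuous, and $X'=\bigcup M_k$, which has full measure. Fix $k$ and $m$. By uniform equicontinuity of $T\shortmid_{M_k}$, there is $r$ such that $B_r(x)\cap M_k\subset B^o_m(x)$ for every $x\in M_k$; hence for $x\in M_k$ and $n\ge r$,
\[
\frac{\mu(B^o_m(x)\cap B_n(x))}{\mu(B_n(x))}\ \ge\ \frac{\mu(M_k\cap B_n(x))}{\mu(B_n(x))}.
\]
By Lebesgue's density theorem applied to the set $M_k$, the right-hand side tends to $1$ as $n\to\infty$ for $\mu$-a.e.\ $x\in M_k$. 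Thus for a.e.\ $x\in M_k$ the $\mu$-equicontinuity condition holds at level $m$; intersecting over all $m\in\mathbb N$ (a countable intersection of full-measure subsets of $M_k$) gives a full-measure subset of $M_k$ consisting of $\mu$-equicontinuity points, and taking the union over $k$ finishes the proof since $\bigcup M_k$ has measure one.

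The main obstacle is the careful bookkeeping in $2)\Rightarrow 1)$: one must run the Vitali selection simultaneously for all scales $m$ with a summable error budget, and then verify that the resulting set — once cut down to a compact subset — genuinely satisfies the definition of $T\shortmid_M$ equicontinuous, i.e.\ the $\varepsilon$–$\delta$ statement $B_n(x)\cap M\subset B^o_{m'}(x)$, rather than merely a pointwise density statement. The triangle-inequality step for $d_o$ (to pass from "$z$ and $x$ are $d_o$-close" to control of $d_o$ on $B_n(z)\cap M$) forces a factor-of-two loss in $m$, so the clean way is to prove it for a doubled sequence of scales and relabel. The other two implications are comparatively soft.
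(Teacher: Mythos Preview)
Your argument is correct and, for the implications $1)\Rightarrow 3)$, $2)\Rightarrow 1)$ under Vitali, and $1)\Rightarrow 2)$ under Lebesgue density, it matches the paper's proof essentially line for line: the paper recasts everything via the change-of-metric map $f_T:(X,d)\to(X,d_o)$ (so that $\mu$-equicontinuity becomes the Lusin property for $f_T$ and $\mu$-equicontinuity points become $\mu$-continuity points of $f_T$), but the underlying Vitali-then-intersect-over-$m$ construction and the Lebesgue-density lower bound you wrote are exactly the content of the paper's Lemma~\ref{countable} and Theorem~\ref{tfae}, including the factor-of-two loss you flagged.

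The one genuine difference is $3)\Rightarrow 1)$. The paper does not argue this directly; it observes that orbit balls are Borel (countable intersections of $d$-balls) and invokes a generalized Lusin theorem (Theorem~\ref{simon}, from Simonnet): a function into a metric space is $\mu$-Lusin as soon as preimages of balls are $\mu$-measurable and the image of a full-measure set is separable. Your ``diagonal/Egorov-type'' sketch is a correct reproof of that theorem in this special case, but as written it is vague; the clean way to execute it is to note that for each $m$ the map sending $x\in X'$ to the first center of a countable $d_o$-cover by $1/(2m)$-balls containing $x$ is measurable with countable range, apply classical Lusin to make it continuous (hence locally constant) on a compact set of measure $>1-\varepsilon/2^m$, intersect over $m$, and use the triangle inequality for $d_o$. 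Either route works; the paper's citation is shorter, yours is more self-contained.
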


A $\mathbb{G-}$\textbf{topological dynamical system (}$\mathbb{G-}%
$\textbf{TDS)} is a pair $(X,T),$ where $X$ is a compact metric space,
$\mathbb{G}$ a semigroup and $T:=\left\{  T^{i}:i\in\mathbb{G}\right\}  $ is a
$\mathbb{G}-$ continuous action on $X.$ We can similarly define the orbit
metric as $d_{o}(x,y):=\sup_{i\in\mathbb{G}}\left\{  d(T^{i}x,T^{i}y)\right\}
.$ With this it is possible to define $\mu-$equicontinuity and $\mu
-$equicontinuity points. Theorem \ref{3i1} also holds on this set up. The
proof is identical. 

\subsection{$\mu-$Continuity}

\begin{definition}
\label{ft}Given a TDS $(X,T)$ we denote the change of metric projection map as
$f_{T}:(X,d)\rightarrow(X,d_{o})$.
\end{definition}

The function $f_{T}^{-1}$ is always continuous. A point $x\in X$ is an
equicontinuity point of $(X,T)$ if and only if it is a continuity point of
$f_{T}.$ Thus $(X,T)$ is equicontinuous if and only if $f_{T}$ is continuous.

\begin{definition}
Let $X,Y$ be metric spaces and $\mu$ a Borel probability measure on $X.$ A
function $f:X\rightarrow Y$ is $\mu-$\textbf{Lusin (}or \textbf{Lusin
measurable}) if for every $\varepsilon>0$ there exists a compact set $M\subset
X$ such that $\mu(M)>1-\varepsilon$ and $f\mid_{M}$ is continuous.
\end{definition}

This implies that a TDS $(X,T)$ is $\mu-$equicontinuous if and only if $f_{T}$
is $\mu-$Lusin.

We will define a measure theoretic notion of continuity point ($\mu
-$continuity point) that satifies the following property: $x\in X$ is a $\mu
-$equicontinuity point of $T$ if and only if $x$ is a $\mu-$continuity point
of $f_{T}$. We will show that if $(X,\mu)$ is Vitali and almost every $x\in X$
is a $\mu-$continuity point of $f$ then $f$ is $\mu-$Lusin; we show the
converse is true if $(X,\mu)$ satisfies Lebesgue's density theorem (Theorem
\ref{tfae}).

In this subsection $X$ will denote a compact metric space, $\mu$ a Borel
probability measure on $X,$ and $Y$ a metric space with metric $d_{Y}$ (and
balls $B_{m}^{Y}(y)\mathbf{:}=\left\{  z\in Y:d_{Y}(y,z)\leq1/m\right\}  )$.

\begin{definition}
In some cases we can talk about the measure of not necessarily measurable
sets. Let $A\subset X.$ We say $A$ has\textbf{ full measure} if $A$ contains a
measurable subset with measure one. We say $\mu(A)<\varepsilon,$ if there
exists a measurable set $A^{\prime}\supset A$ such that $\mu(A^{\prime
})<\varepsilon.$
\end{definition}

\begin{definition}
\label{mucont}A set $A\subset X$ is $\mu-$\textbf{measurable} if $A$ is in the
the completion of $\mu.$

The function $f:X\rightarrow Y$ is $\mu-$\textbf{measurable }if for every
Borel set $D\subset Y$, we have that $f^{-1}(D)$ is $\mu-$measurable.

A point $x\in X$ is a $\mu-$\textbf{continuity} \textbf{point} if for every
$m\in\mathbb{N}$ we have that $1_{f^{-1}(B_{m}^{Y}(f(x)))}$ is $\mu\times\mu
-$measurable and%
\begin{equation}
\lim_{n\rightarrow\infty}\frac{\mu\left[  f^{-1}(B_{m}^{Y}(f(x)))\cap
B_{n}(x)\right]  }{\mu(B_{n}(x))}=1.\label{limite}%
\end{equation}
If $\mu-$almost every $x\in X$ is a $\mu-$continuity point we say $f$ is
$\mu-$\textbf{continuous.}
\end{definition}

Every $\mu-$Lusin function is $\mu-$measurable. Lusin's theorem states the
converse is true if $Y$ is separable (note that $(X,d_{0})$ is not necessarily
separable). This fact is generalized in the following theorem.

\begin{theorem}
[\cite{simonnet1996measures} pg. 145]\label{simon}Let $f:X\rightarrow Y$ be a
function and $\mu$ a Borel probability measure on $X$ such that there exists
$X^{\prime}\subset X$ such that $\mu(X^{\prime})=1$ and $f(X^{\prime})$ is
separable. The function $f$ is $\mu-$Lusin if and only if for every ball $B,$
$f^{-1}(B)$ is $\mu-$measurable.
\end{theorem}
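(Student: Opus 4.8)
The statement is a mild generalization of Lusin's theorem, in which separability of the range $Y$ is weakened to separability of the essential image $f(X')$; the plan is therefore to run the standard proof of Lusin's theorem and check that this weaker hypothesis is all that is used. I would prove the two implications separately, and I may assume $X'$ is Borel (replace it by a Borel subset of full measure, which has separable image since subspaces of separable metric spaces are separable).

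The direction ``$f$ is $\mu-$Lusin $\Rightarrow$ every $f^{-1}(B)$ is $\mu-$measurable'' needs no separability: by the remark before the theorem $f$ is then $\mu-$measurable, but concretely one picks compact $M_k$ with $\mu(M_k)\to 1$ and $f\mid_{M_k}$ continuous, observes that $f^{-1}(B)\cap M_k=(f\mid_{M_k})^{-1}(B)$ is closed (the balls $B_m^Y(y)$ are closed) hence Borel, and concludes that $f^{-1}(B)$ differs from the Borel set $\bigcup_k\bigl(f^{-1}(B)\cap M_k\bigr)$ by a subset of the $\mu-$null set $X\setminus\bigcup_k M_k$.

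For the converse, fix $\varepsilon>0$ and construct a compact $M$ with $\mu(M)>1-\varepsilon$ and $f\mid_M$ continuous. Choose $\{y_j\}\subset f(X')$ dense in $f(X')$. For each scale $m$ the closed balls $B_m^Y(y_j)$ cover $f(X')$, so the $\mu-$measurable sets $A_{m,j}:=f^{-1}(B_m^Y(y_j))$ cover $X'$; disjointify over $j$ to obtain pairwise disjoint $\mu-$measurable sets $E_{m,j}$ covering $X'$ with $\operatorname{diam}f(E_{m,j})\le 2/m$. Because $\sum_j\mu(E_{m,j})\le 1$, truncate to indices $j\le N_m$ losing $<\varepsilon 2^{-m-1}$ in measure; then, using that a Borel probability measure on a compact metric space is inner regular by compact sets (applied via a Borel subset of each $E_{m,j}$ of the same measure, since the $E_{m,j}$ are only $\mu-$measurable), replace each $E_{m,j}$ by a compact $K_{m,j}\subset E_{m,j}$ with total further loss $<\varepsilon 2^{-m-1}$. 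Put $M_m:=\bigcup_{j\le N_m}K_{m,j}$ and $M:=\bigcap_m M_m$. Then $M$ is compact, $\mu(X'\setminus M)\le\sum_m\mu(X'\setminus M_m)<\varepsilon$ so $\mu(M)\ge\mu(X')-\mu(X'\setminus M)>1-\varepsilon$, and $f\mid_M$ is continuous: given $x\in M$ and $\delta>0$, choose $m$ with $2/m<\delta$ and the unique $j\le N_m$ with $x\in K_{m,j}$; since the finitely many disjoint compact sets $\{K_{m,j}\}_{j\le N_m}$ are relatively clopen in $M_m$, some neighborhood of $x$ in $M$ lies inside $K_{m,j}$, on which $f$ has oscillation $\le 2/m<\delta$. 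Hence $f$ is $\mu-$Lusin.

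I expect the main difficulty to be organizational rather than conceptual: one must keep in mind that $f^{-1}(B_m^Y(y_j))$ is only $\mu-$measurable, so inner regularity has to be routed through a Borel subset of full relative measure, and the error tolerances must be chosen to sum over both indices $(m,j)$ to at most $\varepsilon$. The single structural point — what upgrades ``small oscillation on each piece'' to genuine continuity on the limit set $M$ — is that a \emph{finite} disjoint union of compact sets has each summand relatively clopen; this is precisely why the truncation to $j\le N_m$ is essential and why the argument collapses if one tries to retain all countably many pieces.
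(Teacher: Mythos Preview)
The paper does not prove this statement: it is quoted as a known result from \cite{simonnet1996measures} (p.~145) and no argument is given. Your proposal supplies a correct, self-contained proof via the standard Lusin construction (disjointify preimages of small balls centered at a countable dense subset of $f(X')$, truncate to finitely many pieces, approximate from inside by compacts, intersect over scales), and your observation that the finiteness of the truncation is exactly what makes each compact piece relatively clopen in $M_m$---hence yields genuine continuity on $M$---is the right structural point. Nothing to correct.
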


\begin{lemma}
\label{countable}Let $(X,\mu)$ be Vitali and $f$ :$X\rightarrow Y$ $\mu
-$continuous. For every $m\in\mathbb{N}$ and $\varepsilon>0$ there exists a
finite set $F_{m,\varepsilon}$ and a function $n(x)$ such that $\left\{
B_{n(x)}(x)\right\}  _{x\in F_{m,\varepsilon}}$ are disjoint, and $\mu
(\cup_{x\in F_{m,\varepsilon}}\left[  f^{-1}(B_{m}^{Y}(f(x)))\cap
B_{n(x)}(x)\right]  )>1-\varepsilon.$
\end{lemma}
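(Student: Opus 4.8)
The plan is to combine the pointwise density-one convergence guaranteed by $\mu$-continuity with a single application of Vitali's covering theorem, after first upgrading that pointwise convergence to a uniform rate on a set of nearly full measure.

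First I would fix $m\in\mathbb{N}$ and $\varepsilon>0$ and choose a Borel set $\tilde A\subset X$ with $\mu(\tilde A)=1$ all of whose points are $\mu$-continuity points of $f$ (which exists because $f$ is $\mu$-continuous); note $\mu(B_n(x))>0$ for every $x\in\tilde A$ and every $n$, since $\mu$-continuity points lie in $\operatorname{supp}\mu$. For $x\in\tilde A$ put $r_n(x):=\frac{\mu[f^{-1}(B_m^Y(f(x)))\cap B_n(x)]}{\mu(B_n(x))}$, so that $r_n(x)\to 1$ as $n\to\infty$ for every such $x$. The upgrading step is an exhaustion argument: the sets $A_k:=\bigcap_{n\ge k}\{x\in\tilde A: r_n(x)>1-\varepsilon/4\}$ form an increasing sequence with union $\tilde A$, so I can choose $k_0$ with $\mu(A_{k_0})>1-\varepsilon/4$ and then a Borel set $A'\subset A_{k_0}$ with $\mu(A')=\mu(A_{k_0})$; on $A'$ the estimate $r_n(x)>1-\varepsilon/4$ holds simultaneously for all $n\ge k_0$.

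Next I would apply the Vitali property to the Borel set $A'$ with parameters $N=k_0$ and error $\varepsilon/4$, obtaining a finite set $F\subset A'$ and integers $n(x)\ge k_0$ ($x\in F$) such that the balls $\{B_{n(x)}(x)\}_{x\in F}$ are pairwise disjoint and $\mu(A'\setminus\bigcup_{x\in F}B_{n(x)}(x))\le\varepsilon/4$; this $F$ is the desired $F_{m,\varepsilon}$. For each $x\in F$ the uniform estimate gives $\mu[B_{n(x)}(x)\setminus f^{-1}(B_m^Y(f(x)))]<\tfrac{\varepsilon}{4}\mu(B_{n(x)}(x))$, and by disjointness these errors add up to at most $\tfrac{\varepsilon}{4}\mu(\bigcup_{x\in F}B_{n(x)}(x))\le\varepsilon/4$. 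Since $\mu(\bigcup_{x\in F}B_{n(x)}(x))\ge\mu(A')-\varepsilon/4>1-\varepsilon/2$, subtracting the two bounds gives $\mu(\bigcup_{x\in F}[f^{-1}(B_m^Y(f(x)))\cap B_{n(x)}(x)])>1-\varepsilon$, which is the claim; all sets appearing here are $\mu$-measurable since $f^{-1}(B_m^Y(f(x)))$ is $\mu$-measurable for each $x\in F\subset\tilde A$.

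The step I expect to need the most care is justifying that each $A_k$ is $\mu$-measurable, so that the exhaustion and the passage to a Borel subset make sense; this is precisely where the $\mu\times\mu$-measurability clause of Definition \ref{mucont} enters. Applying Fubini's theorem for complete product measures to the set $\{(x,y): d_Y(f(x),f(y))\le 1/m\}\cap\{(x,y): d(x,y)\le 1/n\}$ shows that $x\mapsto\mu[f^{-1}(B_m^Y(f(x)))\cap B_n(x)]$ is $\mu$-measurable, and since $x\mapsto\mu(B_n(x))$ is upper semicontinuous, $r_n$ and hence every $A_k$ is $\mu$-measurable on $\tilde A$. The remainder is the routine "make the convergence uniform, then cover by disjoint balls" pattern, with disjointness of the Vitali family converting the per-ball $(1-\varepsilon/4)$-density estimates into one global estimate.
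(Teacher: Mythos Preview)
Your proof is correct and follows essentially the same route as the paper's: upgrade the pointwise density-one convergence to a uniform rate on a set of nearly full measure via an exhaustion by the sets $A_k$, apply the Vitali property to that large set, and combine the per-ball density estimate with disjointness; the measurability of the $A_k$ is handled in both arguments by Fubini applied to the two-variable indicator $1_{\{d_Y(f(x),f(y))\le 1/m\}}\cdot 1_{\{d(x,y)\le 1/n\}}$. Your version is slightly tidier in tracking the constants (ending at $1-\varepsilon$ rather than $1-3\varepsilon$) and in explicitly passing to a Borel subset $A'$ before invoking the Vitali property, which as stated in the paper applies to Borel sets.
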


\begin{proof}
Let $\varepsilon>0.$ For every $\mu-$equicontinuity point $x$ and
$m\in\mathbb{N}$ there exists $N_{x}$ such that $\frac{\mu(f^{-1}(B_{m}%
^{Y}(f(x)))\cap B_{n}(x))}{\mu(B_{n}(x))}>1-\varepsilon$ for every $n\geq
N_{x}.$ Let
\[
A_{i}:=\left\{  x\mid x\text{ is a }\mu-\text{continuity point, }N_{x}\leq
i\right\}  .
\]
Using the function $f=1_{f^{-1}(B_{m}^{Y}(f(x)))}\cdot1_{B_{n}(x)}$ and
Fubini's theorem one can check that $\frac{\mu(f^{-1}(B_{m}^{Y}(f(x)))\cap
B_{n}(x))}{\mu(B_{n}(x))}$ is a measurable function and hence the sets $A_{i}$
are also $\mu-$measurable. Since $\cup_{i\in\mathbb{N}}A_{i}$ contains the set
of $\mu-$continuity points, there exists $N$ such that $\mu(X\diagdown
A_{N})<\varepsilon.$ Since $(X,\mu)$ is Vitali there exists a finite set of
points $F_{m,\varepsilon}\subset$ $A_{N}$ and a function $n(x)\geq N$ such
that $\left\{  B_{n(x)}(x)\right\}  _{x\in F_{m,\varepsilon}}$ are disjoint
and $\mu(\cup_{x\in F_{m,\varepsilon}}B_{n(x)}(x))>\mu(A_{N})-\varepsilon.$
This implies $\mu(\cup_{x\in F_{m,\varepsilon}}B_{n(x)}(x))>1-2\varepsilon.$
Since $F_{m,\varepsilon}\subset A_{N},$ we obtain
\[
\mu(\cup_{x\in F_{m,\varepsilon}}\left[  f^{-1}(B_{m}^{Y}(f(x)))\cap
B_{n(x)}(x)\right]  )>1-3\varepsilon.
\]

\end{proof}

\begin{theorem}
\label{tfae}Let $f:X\rightarrow Y$. Consider the following properties

1) $f$ is $\mu-$Lusin.

2) $f$ is $\mu-$continuous.

If $(X,\mu)$ is Vitali then $2)\Longrightarrow1)$.

If $(X,\mu)$ satisfies Lebesgue's density theorem then $1)\Longrightarrow2).$
\end{theorem}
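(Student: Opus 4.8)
The plan is to prove the two implications separately, using the dictionary provided by Definitions \ref{mucont}: a point is a $\mu$-continuity point of $f$ exactly when the density condition (\ref{limite}) holds for every $m$.

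For $2)\Longrightarrow1)$ under the Vitali hypothesis, the idea is to use Lemma \ref{countable} to chop $X$ into finitely many pieces on each of which $f$ has small oscillation, and then iterate this over scales. Fix $\varepsilon>0$ and $m\in\mathbb{N}$; Lemma \ref{countable} (applied with $\varepsilon 2^{-m}$, say) produces a finite family of disjoint balls $\{B_{n(x)}(x)\}_{x\in F_{m}}$ together with the sets $E_{m,x}:=f^{-1}(B_m^Y(f(x)))\cap B_{n(x)}(x)$ whose union $U_m:=\bigcup_{x\in F_m}E_{m,x}$ has measure exceeding $1-\varepsilon 2^{-m}$. On $E_{m,x}$ the function $f$ takes values in a single ball of radius $1/m$, so if we set $K:=\bigcap_{m\ge 1}U_m$ then $\mu(K)>1-\varepsilon$ and on $K$ the restriction $f\mid_K$ has the property that points lying in the same $E_{m,x}$ are mapped $2/m$-close; this forces $f\mid_K$ to be continuous (given a target tolerance $1/m$, pick the cell of level $2m$ containing the point, which is relatively open in $K$ since it is a finite intersection/union of balls intersected with $K$). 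Finally shrink $K$ to a compact subset $M$ of almost the same measure using inner regularity of $\mu$; then $\mu(M)>1-\varepsilon$ and $f\mid_M$ is continuous, which is exactly $\mu$-Lusin. One must also check that $1_{f^{-1}(B_m^Y(f(x)))}$ being $\mu\times\mu$-measurable (part of the definition of $\mu$-continuity point) is what legitimizes the Fubini argument already used in Lemma \ref{countable}; here it can simply be invoked.

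For $1)\Longrightarrow2)$ under Lebesgue's density theorem, suppose $f$ is $\mu$-Lusin. For each $k$ choose a compact $M_k$ with $\mu(M_k)>1-1/k$ and $f\mid_{M_k}$ continuous; let $M:=\bigcup_k M_k$, so $\mu(M)=1$, and it suffices to show a.e.\ point of $M$ is a $\mu$-continuity point. Fix $k$ and a point $x\in M_k$. By continuity of $f\mid_{M_k}$ at $x$, for each $m$ there is $\delta>0$ with $f(B_\delta(x)\cap M_k)\subset B_m^Y(f(x))$, hence $B_\delta(x)\cap M_k\subset f^{-1}(B_m^Y(f(x)))$. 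By Lebesgue's density theorem applied to the Borel set $M_k$, for a.e.\ $x\in M_k$ we have $\mu(M_k\cap B_n(x))/\mu(B_n(x))\to 1$; combined with the inclusion above (which gives $\mu(f^{-1}(B_m^Y(f(x)))\cap B_n(x))\ge \mu(M_k\cap B_n(x))$ once $1/n\le\delta$), this yields (\ref{limite}) for that $m$. Intersecting the resulting full-measure subsets of $M_k$ over all $m\in\mathbb{N}$ and then taking the union over $k$ gives a full-measure set of $\mu$-continuity points. The measurability requirement in Definition \ref{mucont} follows from $f$ being $\mu$-measurable (a consequence of $\mu$-Lusin) together with Fubini, as in Lemma \ref{countable}.

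The main obstacle is the $2)\Longrightarrow1)$ direction: one has to be careful that the sets $E_{m,x}$ used to build $K$ are genuinely measurable (this is why the $\mu\times\mu$-measurability clause sits in the definition of $\mu$-continuity point), and one has to verify that the relative topology on $K=\bigcap_m U_m$ really makes $f\mid_K$ continuous rather than merely "continuous along a cofinal sequence of scales" — the point being that each $U_m$ is a finite union of sets of the form (preimage of a ball) $\cap$ (ball), so intersecting with $K$ and localizing is legitimate. The passage from the measurable set $K$ to a compact $M$ is routine inner regularity and is not where the difficulty lies.
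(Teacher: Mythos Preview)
Your proposal is correct and follows essentially the same route as the paper: for $2)\Rightarrow1)$ you invoke Lemma~\ref{countable} at scale $m$ with error $\varepsilon/2^{m}$, intersect over $m$, and pass to a compact subset by regularity; for $1)\Rightarrow2)$ you use continuity on a large compact $M_k$ together with Lebesgue density of $M_k$, exactly as the paper does. The only point worth tightening is your justification that the level-$m$ cells $E_{m,x}$ are relatively open in $K$: the correct reason is that the finitely many disjoint closed balls $B_{n(x)}(x)$ are at positive distance from one another, not that the $E_{m,x}$ are built from balls.
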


\begin{proof}
$2)+Vitali\Longrightarrow1)$

Let $m\in\mathbb{N}$ and $\varepsilon>0.$ By Lemma \ref{countable} there
exists a finite set of points $F_{m,\varepsilon/2^{m}}$ and a function $n_{x}$
(defined on $F_{m,\varepsilon/2^{m}}$) such that
\[
\mu(\cup_{x\in F_{m,\varepsilon/2^{m}}}\left[  f^{-1}(B_{m}^{Y}(f(x)))\cap
B_{n_{x}}(x)\right]  )>1-\varepsilon/2^{m}%
\]
and $\left\{  B_{n(x)}(x)\right\}  _{x\in F_{m,\varepsilon/2^{m}}}$ are
disjoint$.$ Let%
\[
x_{1},x_{2}\in\cup_{x\in F_{m,\varepsilon/2^{m}}}G_{x}:=\cup_{x\in
F_{m,\varepsilon/2^{m}}}\left[  f^{-1}(B_{m}^{Y}(f(x)))\cap B_{n_{x}%
}(x)\right]  .
\]
If $x_{1}$ and $x_{2}$ are sufficiently close then they will be in the same
set $G_{x}$ and hence $f(x_{1})\in B_{2m}^{Y}(f(x_{2})).$

Let
\[
M:=\cap_{m\in\mathbb{N}}\cup_{x\in F_{m,\varepsilon/2^{m}}}\left[
f^{-1}(B_{m}^{Y}(f(x)))\cap B_{n_{x}}(x)\right]  .
\]
Then $f\shortmid_{M}$is continuous and $\mu(M)>1-\varepsilon$. The regularity
of the Borel measure gives us the existence of the compact set.

$1)+LDT\Rightarrow2)$

Since $f$ is $\mu-$Lusin we have that it is $\mu-$measurable.

Let $\varepsilon>0.$ There exists a compact set $M\subset X$ such that
$\mu(M)>1-\varepsilon$ and $f\mid_{M}$ is continuous. Using Lebesgue's density
theorem we have%
\[
\lim_{n\rightarrow\infty}\frac{\mu(M\cap B_{n}(x))}{\mu(B_{n}(x))}=1\text{
\ for a.e. }x\in M.
\]

Let $m\in\mathbb{N}$. Since $f\mid_{M}$ is continuous then for sufficiently
large $n$ and almost every $x\in M,$ $M\cap B_{n}(x)\subset f^{-1}(B_{m}%
^{Y}(f(x))),$ and so for almost every $x$%
\[
\lim_{n\rightarrow\infty}\frac{\mu(f^{-1}(B_{m}^{Y}(f(x)))\cap B_{n}(x))}%
{\mu(B_{n}(x))}=1.
\]
This implies the set of $\mu-$continuity points has measure larger than
$1-\varepsilon$. We conclude the desired result.
\end{proof}

\begin{proof}
[Proof of Theorem \ref{3i1}]Take $f=f_{T}$\bigskip, $(Y,d_{Y})=(X,d_{o})$,
$B_{m}^{Y}=B_{m}^{o}.$

Note that orbit balls are countable intersections of balls, hence measurable.

$3)\Rightarrow1)$

Apply Theorem \ref{simon}.

$1)\Rightarrow3)$

If $f_{T}$ is $\mu-$Lusin, then that for every $\kappa>0$ there exists a
compact set $M_{\kappa}\subset X$ such that $\mu(M_{\kappa})>1-\kappa$ and
$f_{0}\mid_{M_{\kappa}}$ is continuous. This implies that $X^{\prime}%
=\cup_{n\in\mathbb{N}}M_{1/n}$ $\ $satisfies $\mu(X^{\prime})=1$ and
$(X^{\prime},d_{0})$ is separable.

For the other implications apply Theorem \ref{tfae}.
\end{proof}

The concept of approximate continuity for metric measure spaces has a similar
flavour to $\mu-$continuity. It also emulates locally the definition of
continuity using measures. The definition of an approximate continuity point
is stronger than (\ref{limite}) (in Definition \ref{mucont}) but it does not
assume the measurability of any set. A classical result states that if the
image of a function is separable then approximate continuity and measurability
are equivalent (Theorem 2.9.13 \cite{federer1996geometric}).

Similar notions as (\ref{limite}) have been studied. For example a result by
Sierpinski states that every function $f:\mathbb{R\rightarrow R}$ satisfies
(\ref{limite}) with respect to $\mu=\nu^{\ast},$ where $\nu^{\ast\text{ }}$is
the outer measure defined by Lebesgue's measure (Theorem 2.6.2
\cite{kannan1996advanced}).

\subsection{Local periodicity}

In this subsection we study local periodicity and $\mu-LEP$ systems, a
subclass of $\mu-$equicontinuous systems.

\begin{definition}
\label{leptds}Let $(X,T)$ be a TDS, $x\in X$ and $m\in\mathbb{N}$. We define
$LP_{m}(T)$ as the set of points $x$ such that there exists $p\in\mathbb{N}$
with
\[
d(T^{i}x,T^{i+jp}x)\leq1/m\text{\ \ \ \ for every }i,j\in\mathbb{Z}_{+}.
\]

We define $LEP_{m}(T)$ as the set of points, $x$ such that there exists
$p,q\in\mathbb{N}$ with
\[
d(T^{i}x,T^{i+jp}x)\leq1/m\text{ \ \ \ for every }i\geq q,j\in\mathbb{Z}_{+}.
\]
For $x\in LEP_{m}(T)$, $p_{m}(x)$ denotes the smallest possible $p$ and
$pp_{m}(x)$ the smallest possible $q$ for $p_{m}(x).$ We also define%
\begin{align*}
LP(T)  &  :=\cap_{m\in\mathbb{N}}LP_{m}(T)\\
LEP(T)  &  :=\cap_{m\in\mathbb{N}}LEP_{m}(T).
\end{align*}

A TDS $(X,T)$ is \textbf{locally eventually periodic} $(LEP)$ if $LEP(T)=X$
and\textbf{\ locally periodic}\emph{\ }$(LP)$ if $LP(T)=X.$
\end{definition}

\begin{remark}
\label{cantor}If $X$ is a Cantor space then $LP_{m}(T)$ is the set of points
$x$ such that $(T^{i}x)_{W_{m}}$ is periodic.
\end{remark}

\begin{example}
\label{odom}Let $S=(s_{0},s_{1},...)$ be a finite or infinite sequence of
natural numbers. The $S-$\textbf{adic odometer} is the $+(1,0,...)$ (with
carrying) map defined on the compact set $D=\prod\nolimits_{i\geq0}%
\mathbb{Z}_{s_{i}}$ (for a survey on odometers see \cite{surveyodometers}).
\end{example}

Odometers are locally periodic.

Subshifts are a special kind of TDS on symbolic spaces where $T$ is the left
shift, $\sigma$ (see \cite{lindmarcus} for definitions). It is not hard to see
that if $(X,\sigma)$ is a subshift then $x\in LP(\sigma)$ if and only if $x$
is right-periodic.

A closely related concept is regular recurrence. A point $x\in X$ is
\textbf{regularly recurrent} if for every $m$ there exists $p>0$ such that
$d(x,T^{ip}x)\leq1/m$ for every $i\in\mathbb{N}.$ Every $LP$ point is
regularly recurrent, but regularly recurrent points are not necessarily even
$LEP$ (for example Toeplitz subshifts \cite{surveyodometers}). $LEP$ points
are not necessarily regularly recurrent (the point 1000... in a one-sided
subshift). If every point of a minimal $TDS$ is regularly recurrent then it is
conjugate to an odometer \cite{Block2004151}, and hence $LP.$

\bigskip

Equicontinuity and local periodicity are related concepts.

Using results from \cite{bruckner} it is easy to show that if $X$ is the unit
interval and $(X,T)$ an equicontinuous TDS then $(X,T)$ is $LEP.$ Using
results from \cite{Valaristos1998} it is easy to see that if $X$ is the unit
circle and $(X,T)$ an equicontinuous TDS then $(X,T)$ is either an irrational
rotation or $LEP.$

Suppose $X$ a subshift and $T:X\rightarrow X$ a continuous shift-commuting
transformation (these TDS are known as cellular automata or shift
endomorphisms). In \cite{mueqca} it is shown that if $(X,T)$ is $LP$ then it
is equicontinuous, that if $(X,T)$ is equicontinuous then it is $LEP,$ and
that there exist non-equicontinuous $LEP$ systems.

\bigskip

\begin{definition}
Let $(X,T)$ be a TDS and $\mu$ a Borel probability measure on $X$. If
$\mu(LEP(T))=1$ we say $(X,T)$ is $\mu-$\textbf{locally eventually periodic}
($\mu-\mathbf{LEP})$ and $\mu$ is\textbf{\ }$T-$\textbf{locally eventually
periodic}\emph{\ (}$\mathbf{T-LEP}).$ We define $\mu-\mathbf{LP}$ and
$\mathbf{T-LP}$ analogously.
\end{definition}

\begin{lemma}
\label{definitiony}Let $m\in\mathbb{N}$ and $\varepsilon>0.$ If $(X,T)$ is
$\mu-LEP$ then there exist positive integers $p_{\varepsilon}^{m}$ and
$q_{\varepsilon}^{m}$ such that $\mu(Y_{\varepsilon}^{m})>1-\varepsilon,$
where
\begin{equation}
Y_{\varepsilon}^{m}:=\left\{  x\mid x\in LEP(T),\text{ with }p_{m}(x)\leq
p_{\varepsilon}^{m}\text{ and }pp_{m}(x)\leq q_{\varepsilon}^{m}\right\}  .
\label{laye}%
\end{equation}

\end{lemma}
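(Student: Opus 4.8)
The plan is a measure-exhaustion argument via upward continuity of measure; the actual work is verifying that the sets involved are Borel, which is slightly delicate because $p_m$ and $pp_m$ are defined through two nested minimizations.

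Fix $m\in\mathbb{N}$. For positive integers $p,q$ set
\[
A_{p,q}:=\left\{x\in X : d(T^ix,T^{i+jp}x)\le 1/m\ \text{ for all } i\ge q\text{ and all } j\in\mathbb{Z}_+\right\}.
\]
Each map $x\mapsto d(T^ix,T^{i+jp}x)$ is continuous, so $A_{p,q}$ is a countable intersection of closed sets, hence closed; moreover $A_{p,q}\subset A_{p,q'}$ whenever $q\le q'$, and $LEP_m(T)=\bigcup_{p,q}A_{p,q}$. From the definitions, $\{x:p_m(x)=p\}=\bigl(\bigcup_q A_{p,q}\bigr)\setminus\bigl(\bigcup_{p'<p}\bigcup_q A_{p',q}\bigr)$, a difference of $F_\sigma$ sets, hence Borel; and on this set $pp_m(x)=\min\{q:x\in A_{p,q}\}$, so monotonicity in $q$ yields the identity $\{x:p_m(x)=p,\ pp_m(x)\le N\}=\{x:p_m(x)=p\}\cap A_{p,N}$, again Borel. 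Consequently
\[
Z_N:=\left\{x\in LEP_m(T):p_m(x)\le N,\ pp_m(x)\le N\right\}=\bigcup_{p\le N}\bigl(\{x:p_m(x)=p\}\cap A_{p,N}\bigr)
\]
is Borel, the sequence $(Z_N)_{N\in\mathbb{N}}$ is increasing, and $\bigcup_N Z_N=LEP_m(T)$ because $p_m(x)$ and $pp_m(x)$ are finite for every $x\in LEP_m(T)$.

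Finally I would invoke upward continuity of measure: $\mu(Z_N)\to\mu(LEP_m(T))\ge\mu(LEP(T))=1$, so there is $N$ with $\mu(Z_N)>1-\varepsilon$; put $p_\varepsilon^m:=N$ and $q_\varepsilon^m:=N$. Since $LEP(T)\subset LEP_m(T)$, the set $Y_\varepsilon^m$ of the statement equals $Z_N\cap LEP(T)$, which is $\mu$-measurable, and $\mu(LEP(T))=1$ gives $\mu(Y_\varepsilon^m)=\mu(Z_N)>1-\varepsilon$, as required. The only point requiring care — and the main obstacle — is the measurability bookkeeping of the middle paragraph: one must keep in mind that $pp_m(x)$ is the minimal preperiod for the minimal period $p_m(x)$, not for an arbitrary admissible period, which is why $Z_N$ has to be written as $\bigcup_{p\le N}\bigl(\{x:p_m(x)=p\}\cap A_{p,N}\bigr)$ rather than the simpler (and wrong) $\bigcup_{p\le N}A_{p,N}$. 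Once the $A_{p,q}$ are recognized as closed, the rest is routine.
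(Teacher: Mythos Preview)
Your proof is correct and follows the paper's approach exactly: express the full-measure set $LEP(T)$ as an increasing union over the bounds on $p_m$ and $pp_m$, then invoke continuity from below. The paper's own proof is the two-line version that simply omits the Borel-measurability bookkeeping you carefully spell out via the sets $A_{p,q}$.
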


\begin{proof}
Let
\[
Y:=\cup_{s,t\in\mathbb{N}}\left\{  x\mid x\in LEP(T),\text{ with }p_{m}(x)\leq
s\text{ and }pp_{m}(x)\leq t\right\}  .
\]
Since $(X,T)$ is $\mu-LEP$ we have that $\mu(Y)=1.$ Monotonicity of the
measure gives the desired result.
\end{proof}

\begin{definition}
Let $(X,T)$ be a $\mu-LEP$ TDS, $m\in\mathbb{N}$, and $\varepsilon>0.$ We will
use $p_{\varepsilon}^{m}$ and $q_{\varepsilon}^{m}$ to denote a particular
choice of integers that satisfy the conditions of Lemma \ref{definitiony} and
that satisfy $p_{\varepsilon}^{m}\rightarrow\infty$ and $q_{\varepsilon}%
^{m}\rightarrow\infty,$ as $\varepsilon\rightarrow0.$ We define
$Y_{\varepsilon}^{m}$ as in (\ref{laye}).
\end{definition}

\begin{proposition}
\label{lepeq}Let $(X,T)$ be a TDS. If $(X,T)$ is $\mu-LEP$ then $(X,T)$ is
$\mu-$equicontinuous.
\end{proposition}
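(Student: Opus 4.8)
The plan is to show that a $\mu$-$LEP$ system has, for almost every point, an orbit ball containing a whole neighborhood (relative to a large-measure set), which is exactly what is needed to invoke Theorem \ref{3i1}. More precisely, I would prove that almost every $x \in X$ is a $\mu$-equicontinuity point and then apply the implication $2) \Rightarrow 1)$ of Theorem \ref{3i1}; since the excerpt tells us that every space we care about is Vitali (and indeed the statement of Theorem \ref{3i1} only needs Vitali for this direction), this suffices. So the real content is: $\mu(LEP(T)) = 1$ implies almost every point is a $\mu$-equicontinuity point.

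First I would fix $m \in \mathbb{N}$ and $\varepsilon > 0$ and bring in the sets $Y_{\varepsilon}^{m}$ from Lemma \ref{definitiony}, so that $\mu(Y_{\varepsilon}^{m}) > 1 - \varepsilon$ and every $x \in Y_{\varepsilon}^{m}$ satisfies $d(T^{i}x, T^{i+jp}x) \le 1/(2m)$ for all $i \ge q_{\varepsilon}^{2m}$, $j \ge 0$, with a uniform period bound $p_m(x) \le p_{\varepsilon}^{2m}$ (I would run the argument at scale $2m$ to absorb triangle inequalities). The key geometric observation is this: if $x, y \in Y_{\varepsilon}^{2m}$ are close enough that they agree up to time $q_{\varepsilon}^{2m} + p_{\varepsilon}^{2m}$ in the sense $d(T^i x, T^i y) \le 1/(2m)$ for all $i \le q_{\varepsilon}^{2m} + p_{\varepsilon}^{2m}$ — which happens whenever $d(x,y) \le 1/n$ for $n$ large enough depending only on $m, \varepsilon$ via continuity of the finitely many maps $T^0, \dots, T^{q_{\varepsilon}^{2m}+p_{\varepsilon}^{2m}}$ — and if moreover $x$ and $y$ have a common period $p \le p_{\varepsilon}^{2m}$ (e.g. $p = \mathrm{lcm}$ of the two, or just restrict to a subset where the period is a fixed value), then for every $i$ we can reduce $i$ modulo $p$ into the window $[q_{\varepsilon}^{2m}, q_{\varepsilon}^{2m}+p)$ and chain three estimates $d(T^i x, T^{i'} x) \le 1/(2m)$, $d(T^{i'}x, T^{i'}y) \le 1/(2m)$ — wait, that gives $2/(2m) = 1/m$ after two steps, and one more step to get back, so I should run everything at scale $3m$ or $4m$; I will pick the constant at the end. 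The upshot is that $y \in B_m^o(x)$.

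The main obstacle is the bookkeeping around periods: two points in $Y_{\varepsilon}^{2m}$ need not share a period, so I would partition $Y_{\varepsilon}^{2m}$ into the finitely many measurable pieces $Y_{\varepsilon}^{2m}(p) = \{x \in Y_{\varepsilon}^{2m} : p_m(x) = p\}$, $1 \le p \le p_{\varepsilon}^{2m}$, handle each piece, and then note that a point of density $1$ for the piece it belongs to (which almost every point is, by Lebesgue's density theorem — but I want to avoid assuming LDT here since the target only assumes Vitali) — alternatively, and more cleanly, I would observe that I do not actually need $y$ to land in $B_m^o(x)$ for all nearby $y$, only for $\mu$-most of them, so I can directly estimate, for $x$ a point whose forward orbit is eventually periodic with the bounds above,
\[
\frac{\mu(B_m^o(x) \cap B_n(x))}{\mu(B_n(x))} \ge \frac{\mu(Y_{\varepsilon}^{2m}(p_m(x)) \cap B_n(x))}{\mu(B_n(x))}
\]
for $n$ large, and the right-hand side tends to $1$ for a.e.\ $x \in Y_{\varepsilon}^{2m}(p_m(x))$ by Lebesgue density. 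Hmm — this reintroduces LDT. To stay purely within "Vitali," the cleanest route is instead: show directly that $f_T$ restricted to each $Y_{\varepsilon}^{2m}(p)$ is continuous (the argument above shows $d_o(x,y) \le 1/m$ once $d(x,y)$ is small, for $x,y$ in the same piece, and letting $\varepsilon \to 0$, $m \to \infty$ along a sequence of such pieces of measure $> 1 - \varepsilon$ gives a continuity set of $f_T$ of measure $> 1 - \varepsilon$), hence $f_T$ is $\mu$-Lusin, i.e.\ $(X,T)$ is $\mu$-equicontinuous by the remark following Definition \ref{mucont} (the equivalence of $\mu$-equicontinuity with $f_T$ being $\mu$-Lusin). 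I would assemble this by a diagonal/countable-intersection argument over $m$: for each $m$ choose $\varepsilon_m = \varepsilon/2^m$ and a partition piece of $Y_{\varepsilon_m}^{2m}$ of large relative measure, set $M = \bigcap_m (\text{chosen piece})$, check $\mu(M) > 1 - \varepsilon$, and verify $f_T|_M$ is continuous using that on $M$ the orbit-closeness at every scale $1/m$ is controlled by a single metric-closeness threshold depending on $m$. Regularity of $\mu$ then yields the compact set, completing the proof.
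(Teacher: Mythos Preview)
Your overall strategy---build a set $M$ of measure $>1-\varepsilon$ on which $T$ is equicontinuous by exploiting the uniform period and preperiod bounds on $Y_\varepsilon^m$, then reduce large times modulo the period into a bounded window where continuity of finitely many iterates and a triangle inequality do the work---is exactly the paper's approach. The paper simply sets $M=\bigcap_m Y_{\varepsilon/2^m}^{m}$ and shows $B_K(x)\cap Y_\varepsilon^m\subset B_{m/3}^o(x)$ for suitable $K$.

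Where you diverge is the handling of the ``common period'' issue, and here your writeup has a genuine gap. Your final assembly says: for each $m$ pick \emph{one} partition piece $Y_{\varepsilon_m}^{2m}(p)$ of large relative measure and intersect those over $m$. But a single piece need not have measure close to $\mu(Y_{\varepsilon_m}^{2m})$; the periods could be spread over many values, so the intersection $M$ could have measure far below $1-\varepsilon$. Moreover, even if you keep all pieces, continuity of $f_T$ on each piece separately does not give continuity on their union, since nearby points can lie in different pieces. So the partition route, as written, does not close.

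The fix is the move you mention parenthetically and then abandon: for any $x,y\in Y_\varepsilon^m$, the product (or lcm) $p=p_m(x)\,p_m(y)$ is a common period, and it is bounded by $(p_\varepsilon^m)^2$, not by $p_\varepsilon^m$ as you wrote, but still uniformly bounded. Hence one only needs to control the iterates $T^0,\dots,T^{(p_\varepsilon^m)^2+q_\varepsilon^m}$ by continuity; then for any $i\ge q_\varepsilon^m$ write $i=q_\varepsilon^m+jp+k$ with $0\le k<p$ and chain
\[
d(T^i x,T^i y)\le d(T^i x,T^{q_\varepsilon^m+k}x)+d(T^{q_\varepsilon^m+k}x,T^{q_\varepsilon^m+k}y)+d(T^{q_\varepsilon^m+k}y,T^i y)\le 3/m.
\]
No partition is needed, and $M=\bigcap_m Y_{\varepsilon/2^m}^{m}$ works directly. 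Your instinct about the constant was right: the triangle inequality costs a factor of $3$, which is harmless since $m$ is arbitrary.
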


\begin{proof}
Let $\varepsilon>0$ and $M:=\cap_{m\in\mathbb{N}}Y_{\varepsilon/2^{m}}^{m}$
(hence $\mu(M)>1-\varepsilon).$

Let $m\in\mathbb{N}$ and $x\in Y_{\varepsilon/2^{m}}^{m}.$ There exists $K$
such that if $d(x,y)<1/K$ then $d(T^{i}x,T^{i}y)<1/m$ for $0\leq i\leq\left(
p_{\varepsilon}^{m}\right)  ^{2}+q_{\varepsilon}^{m}.$ Let $x,y\in
Y_{\varepsilon}^{m}$, $d(x,y)<1/K$ , $p=p_{m}(x)p_{m}(y)$ and $i\geq
p+q_{\varepsilon}^{m}.$ We can express $i=q_{\varepsilon}^{m}+j\cdot p+k$ with
$j\in\mathbb{N}$ and $k\leq p.$ Using the fact that $x,y\in Y_{\varepsilon
}^{m}$ and $q_{\varepsilon}^{m}+k\leq\left(  p_{\varepsilon}^{m}\right)
^{2}+q_{\varepsilon}^{m}$ we obtain
\begin{align*}
d(T^{i}x,T^{i}y)  &  \leq d(T^{i}x,T^{q_{\varepsilon}^{m}+k}%
x)+d(T^{q_{\varepsilon}^{m}+k}x,T^{q_{\varepsilon}^{m}+k}%
y)+d(T^{q_{\varepsilon}^{m}+k}y,T^{i}y)\\
&  \leq3/m.
\end{align*}

This means that $B_{K}(x)\cap Y_{\varepsilon}^{m}\subset B_{m/3}^{o}(x)$ and
hence $T\mid_{M}$ is equicontinuous.
\end{proof}

The converse of this proposition is not true in general. An irrational
rotation on the circle is equicontinuous, and hence $\mu-$equicontinuous, but
contains no $LEP$ points so it is not $\mu-LEP.$ For cellular automata there
are conditions (for example if $X$ is a shift of finite type and $\mu$ a
shift$-$invariant measure) under which $\mu-$equicontinuity implies $\mu-LEP$
\cite{mueqca}.

\section{Invariant measures and spectral properties}

\bigskip In this section we are interested in measure preserving topological
dynamical systems.

We say $(M,T,\mu)$ is a \textbf{measure preserving transformation (MPT)} if
$(M,\mu)$ is a probability measure space, and $T:M\rightarrow M$ is a measure
preserving transformation (measurable and $T\mu=\mu$)$.$ We say $(M,T,\mu)$ is
\textbf{ergodic} if it is a MPT and every invariant set has measure 0 or 1.

Two measure preserving transformations $(M_{1},T_{1},\mu_{1})$ and
$(M_{2},T_{2},\mu_{2})$ are \textbf{ isomorphic (measurably)} if there exists
1-1 bi-measurable preserving function\textit{\ }$f:(X_{1},\mu_{1}%
)\rightarrow(X_{2},\mu_{2}),$ and that satisfies $T_{2}\circ f=f\circ T_{1}.$

The spectral theory of ergodic systems is particularly useful for studying
rigid transformations. We will give the definitions and state the most
important results. For more details and proofs see
\cite{walters2000introduction}.

A measure preserving transformation $T$ on a probability measure space
$(M,\mu)$ generates a unitary linear operator on the Hilbert space
$L^{2}(M,\mu),$ by $U_{T}:f\mapsto f\circ T,$ known as the \textbf{Koopman
operator}$.$ The spectrum of the Koopman operator is called the
\textbf{spectrum}\textit{\ }of the measure preserving transformation. We say
the spectrum of $(M,T,\mu)$ is\textbf{\ discrete (or pure point)} if there
exists an orthonormal basis for $L^{2}(M,\mu)$ which consists of
eigenfunctions of the Koopman operator$.$ The spectrum is\textit{\ }%
\textbf{rational} if the eigenvalues are complex roots of unity. Classical
results by Halmos and Von Neumann state that two ergodic MPT with discrete
spectrum have the same group of eigenvalues if and only if they are
isomorphic, and that an ergodic MPT has discrete spectrum if and only if it is
isomorphic to a rotation on a compact metric group.

\begin{theorem}
[\cite{mtequicontinuity}]Let $(X,\mu,T)$ be an ergodic $\mu-$equicontinuous
TDS. Then $(X,\mu,T)$ has discrete spectrum.
\end{theorem}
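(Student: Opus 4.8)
The plan is to prove the stronger statement that $(X,\mu,T)$ is a \emph{compact system}: every $f\in L^{2}(X,\mu)$ has precompact forward orbit $\{U_{T}^{n}f:n\geq 0\}=\{f\circ T^{n}:n\geq0\}$ in $L^{2}(X,\mu)$. Granting this, one invokes the standard spectral theory of measure preserving transformations (cf.\ \cite{walters2000introduction}): the set of such $f$ (the almost periodic functions) is exactly the closed linear span of the eigenfunctions of the Koopman operator $U_{T}$, so if it is all of $L^{2}(X,\mu)$ then $U_{T}$ has an orthonormal basis of eigenfunctions, i.e.\ $(X,\mu,T)$ has discrete spectrum. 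The mechanism behind that fact: from precompactness of $\{f\circ T^{n}\}_{n\geq0}$ one extracts a Cauchy subsequence and, using that $U_{T}$ is an isometry, produces integers $m_{k}\to\infty$ with $f\circ T^{m_{k}}\to f$; hence $U_{T}$ restricted to the cyclic subspace generated by $f$ is onto, so unitary there, and the strong-operator closure of its powers is a compact abelian group of unitaries, which decomposes that cyclic subspace into eigenspaces, placing $f$ in the closed span of eigenfunctions.

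Since $C(X)$ is dense in $L^{2}(X,\mu)$ and $U_{T}$ is an isometry, the set of $f$ with precompact forward orbit is a closed linear subspace of $L^{2}(X,\mu)$, so it suffices to treat $f\in C(X)$. Fix such an $f$ and $\varepsilon>0$. By $\mu$-equicontinuity choose a compact $M=M_{\varepsilon}$ with $\mu(M)>1-\varepsilon$ and $T\shortmid_{M}$ equicontinuous; by definition this says precisely that the family $\{T^{i}\shortmid_{M}:i\geq0\}$ of maps $M\to X$ is pointwise equicontinuous, hence (by compactness of $M$) uniformly equicontinuous. As $f$ is uniformly continuous on the compact space $X$, the family $\{f\circ T^{i}\shortmid_{M}:i\geq0\}\subset C(M)$ is uniformly equicontinuous and uniformly bounded by $\|f\|_{\infty}$, so by the Arzel\`{a}--Ascoli theorem it is precompact in $(C(M),\|\cdot\|_{\infty})$.

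Now put $g_{i}:=(f\circ T^{i})\cdot 1_{M}\in L^{2}(X,\mu)$. Then $\|g_{i}-g_{j}\|_{2}\leq\|f\circ T^{i}\shortmid_{M}-f\circ T^{j}\shortmid_{M}\|_{\infty}$, so precompactness in $C(M)$ shows $\{g_{i}\}_{i\geq0}$ is totally bounded in $L^{2}(X,\mu)$; moreover $\|f\circ T^{i}-g_{i}\|_{2}^{2}=\int_{X\setminus M}|f\circ T^{i}|^{2}\,d\mu\leq\|f\|_{\infty}^{2}\varepsilon$. Hence the forward orbit $\{f\circ T^{i}:i\geq0\}$ lies within $\|f\|_{\infty}\sqrt{\varepsilon}$ of a totally bounded subset of $L^{2}(X,\mu)$. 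Since $\varepsilon>0$ was arbitrary and "lying arbitrarily close to totally bounded sets'' forces total boundedness, $\{f\circ T^{i}:i\geq0\}$ is totally bounded, hence precompact in the complete space $L^{2}(X,\mu)$. This gives precompact orbits for every $f\in L^{2}(X,\mu)$, and by the remark above $(X,\mu,T)$ has discrete spectrum.

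The only genuinely non-formal step is the passage from the \emph{local} equicontinuity supplied by the definition of $\mu$-equicontinuity (valid only on the sets $M_{\varepsilon}$) to a \emph{global} compactness statement in $L^{2}(X,\mu)$; this is exactly what the $\varepsilon$-approximation of the third paragraph does, the uniform bound $\|f\|_{\infty}$ controlling the portion of each orbit supported outside $M_{\varepsilon}$. Everything else is Arzel\`{a}--Ascoli plus the standard characterization of discrete spectrum. I note that ergodicity is not actually used in this implication; it is what makes discrete spectrum equivalent, via Halmos--von Neumann, to being isomorphic to a rotation on a compact abelian group.
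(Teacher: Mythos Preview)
The paper does not give its own proof of this theorem; it is quoted without argument as a result of Huang--Li--Ye \cite{mtequicontinuity}, so there is nothing in the present paper to compare your proof against.

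That said, your argument is correct and is essentially the standard route to this implication. The key step---restricting to a compact $M_{\varepsilon}$ on which the iterates are equicontinuous, applying Arzel\`{a}--Ascoli to obtain precompactness of $\{f\circ T^{i}\shortmid_{M_{\varepsilon}}\}$ in $C(M_{\varepsilon})$, and then using the uniform bound $\|f\|_{\infty}$ on the small-measure complement $X\setminus M_{\varepsilon}$ to pass to total boundedness of the full orbit in $L^{2}$---is exactly the right mechanism. The identification of the set of $L^{2}$-functions with precompact $U_{T}$-orbit with the closed linear span of the eigenfunctions (the Kronecker factor) is classical, and your sketch of why precompactness forces $U_{T}$ to be unitary on each cyclic subspace is accurate. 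Your closing remark that ergodicity plays no role in this direction is also correct; it enters only in the Halmos--von~Neumann structure theorem mentioned just before the statement.
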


The converse is not true. For example Sturmian subshifts with their unique
invariant measure have discrete spectrum and it is not hard to see they have
no $\mu-$equicontinuity points. Nonetheless using a weaker version of $\mu
-$equicontinuity ($\mu-$mean equicontinuity) it is possible to characterize
discrete spectrum for ergodic TDS (see \cite{weakeq}).

In the next subsection we show that ergodic $\mu-LEP$ TDS on\ Cantor spaces
have rational spectrum.

\subsection{Local periodicity}

\begin{lemma}
\label{limlep}Let $(X,T)$ be a $\mu-LEP$ TDS and $m\in\mathbb{N}$. If $x\in
LEP_{m}(T)\diagdown LP_{m}(T)$ then
\[
\lim_{n\rightarrow\infty}T^{n}\mu(B_{m}^{o}(x))=0.
\]

\end{lemma}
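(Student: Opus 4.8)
The plan is to exploit the fact that $x \in LEP_m(T) \setminus LP_m(T)$ is eventually periodic on the $m$-window but not periodic, so the orbit ball $B_m^o(x)$ is "thin" and the invariant measure spreads the mass of its preimages $T^{-n}(B_m^o(x))$ across more and more disjoint sets. More precisely, since $x \notin LP_m(T)$, the smallest eventual period $p := p_m(x)$ has a preperiod $q := pp_m(x) \geq 1$; I would first argue that for $0 \le k < l < q$ (or more carefully, over a suitable range of indices corresponding to the non-periodic initial segment) the sets $T^{-k}(B_m^o(x))$ and $T^{-l}(B_m^o(x))$ are disjoint, because a point lying in both would force $T^k x$ and $T^l x$ to be $1/m$-close along their whole forward orbit, contradicting $x \notin LP_m(T)$ when combined with the eventual periodicity. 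Actually the cleanest route: first reduce to Cantor spaces via Remark \ref{cantor}, so $B_m^o(x)$ is exactly the cylinder $[(T^i x)_{W_m}]$ over the $W_m$-trace, and "$x \in LEP_m \setminus LP_m$" says this trace is eventually periodic with period $p$ but not purely periodic.

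The key computation I would run: let $\nu := T^n\mu(B_m^o(x)) = \mu(T^{-n}(B_m^o(x)))$. Using $T$-invariance of $\mu$, $\mu(T^{-n}(B_m^o(x))) = \mu(T^{-(n+p)}(T^{p}(B_m^o(x))))$ — more useful is to compare $B_m^o(x)$ with $B_m^o(T^p x)$: by eventual periodicity with period $p$ starting from time $q$, for $n \ge q$ the forward orbit of $T^n x$ and of $T^{n+p} x$ are $1/m$-close, so in the Cantor case $B_m^o(T^{n}x) \supseteq$ (a shift-translate of) relevant cylinders. I would set $a_n := \mu(T^{-n} B_m^o(x))$ and show $\sum_n a_n \le$ (something finite, like the total measure counted with bounded multiplicity), because the sets $\{T^{-n} B_m^o(x)\}$ can be grouped so that within each residue class mod $p$ past the preperiod the nestedness forces $a_n$ nonincreasing, while the failure of pure periodicity forces a genuine drop that makes $\sum a_n < \infty$; summability of $a_n$ then gives $a_n \to 0$. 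Alternatively, and perhaps more robustly, use Lemma \ref{definitiony}: on the large-measure set $Y_\varepsilon^m$ the eventual period and preperiod are bounded by $p_\varepsilon^m, q_\varepsilon^m$, and a pigeonhole / measure-splitting argument shows $B_m^o(x)$ meets only boundedly many of the "periodic classes," forcing its backward iterates to have vanishing measure.

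The main obstacle I anticipate is making the disjointness/bounded-multiplicity claim precise without pure periodicity: $x \in LEP_m \setminus LP_m$ only tells us the $W_m$-trace is eventually periodic, and the initial segment could be short, so I cannot get infinitely many disjoint preimages directly. The trick I would use is that for $x \in LEP_m(T)$ one also has $T^q x \in LP_m(T)$ (the orbit is periodic on the window from time $q$ on), hence $T^q x \notin LEP_{m}(T) \setminus LP_m(T)$ — but $x$ itself is not in $LP_m$, so the non-periodicity is confined to the preperiod. I would therefore relate $B_m^o(x)$ to $B_m^o(T^q x)$ via the finitely many constraints on coordinates $0,\dots,q-1$, show $\mu(B_m^o(x)) \le \mu(T^{-q}\text{-something})$ and that along the periodic tail the relevant cylinder sets, pulled back, are genuinely disjoint in a way that lets me write $\sum_{j} \mu(T^{-jp'} C) \le 1$ for an appropriate clopen $C$ and $p'$, yielding $\mu(T^{-jp'}C) \to 0$ and hence $T^n\mu(B_m^o(x)) \to 0$ by sandwiching. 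Once summability/vanishing is established the limit statement is immediate.
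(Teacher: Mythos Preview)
Your primary line of attack has two real problems. First, you invoke ``$T$-invariance of $\mu$'' in the key computation, but the lemma does not assume $\mu$ is $T$-invariant (that hypothesis only enters later, in Proposition~\ref{invlp}); so any step that uses $\mu(T^{-n}A)=\mu(A)$ or monotonicity of $a_n$ along residue classes is unsupported. Second, the summability/disjointness scheme is not justified: you never establish that the preimages $T^{-k}(B_m^o(x))$ are disjoint, and your fallback claim that ``the failure of pure periodicity forces a genuine drop that makes $\sum a_n<\infty$'' is asserted without a mechanism. In fact, if $\mu$ \emph{were} invariant, the $a_n$ would all be equal, so summability would force $a_n\equiv 0$, which is not what is being proved here.

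Your ``alternatively'' paragraph is the correct idea and is essentially what the paper does, but you have not isolated the actual one-line observation. Here it is: if $y\in Y_\varepsilon^m$ then its $1/m$-scale orbit pattern is eventually periodic with preperiod at most $q_\varepsilon^m$; hence for $n\ge q_\varepsilon^m$ the point $T^n y$ already satisfies the $LP_m$ condition (its forward orbit is $1/m$-periodic from time zero). If in addition $T^n y\in B_m^o(x)$, then $x$ and $T^n y$ share the same orbit ball, which forces $x\in LP_m(T)$ (in the Cantor case this is literal equality of $W_m$-traces; in general one gets the same conclusion up to the constant implicit in the definition). This contradicts $x\notin LP_m(T)$, so $T^{-n}(B_m^o(x))\cap Y_\varepsilon^m=\varnothing$ for all $n\ge q_\varepsilon^m$, giving $T^n\mu(B_m^o(x))\le \mu(X\setminus Y_\varepsilon^m)<\varepsilon$. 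No disjointness of preimages, no summability, and no invariance is needed.
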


\begin{proof}
Let $\varepsilon>0$, and $x\in LEP_{m}(T)\diagdown LP_{m}(T)$. The periods in
$Y_{\varepsilon}^{m}$ are bounded so if $n$ is sufficiently large then
$T^{-n}(B_{m}^{o}(x))\cap Y_{\varepsilon}^{m}=\emptyset;$ hence%
\[
\lim_{n\rightarrow\infty}T^{n}\mu(B_{m}^{o}(x))<\varepsilon.
\]

\end{proof}

\begin{proposition}
\label{invlp}Let $(X,T)$ be a $\mu-LEP$ TDS. If $T\mu=\mu$ then $T$ is
$\mu-LP.$
\end{proposition}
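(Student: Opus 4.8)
The plan is to reduce the claim to a single index: since $LP(T)=\bigcap_{m\in\mathbb{N}}LP_m(T)$ is a countable intersection, it suffices to show $\mu(LP_m(T))=1$ for every $m$. As $(X,T)$ is $\mu-LEP$ we have $\mu(LEP_m(T))=1$, so the goal becomes showing that
\[
A_m:=LEP_m(T)\setminus LP_m(T)
\]
is $\mu$-null. (The set $A_m$ is Borel: $LP_m(T)$ and $LEP_m(T)$ are countable unions — over the period $p$, resp. the pair $(p,q)$ — of sets cut out by countably many closed conditions of the form $d(T^ix,T^{i+jp}x)\le 1/m$, using continuity of $T$.)

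The pointwise engine is Lemma \ref{limlep}: for each $x\in A_m$ one has $\lim_{n\to\infty}T^n\mu(B_m^o(x))=0$. Here I would use the hypothesis $T\mu=\mu$: the pushforwards then satisfy $T^n\mu=\mu$ for all $n$, so the sequence above is constant equal to $\mu(B_m^o(x))$, forcing $\mu(B_m^o(x))=0$ for every $x\in A_m$. Since $d_o(x,x)=0$ we have $x\in B_m^o(x)$, and hence $A_m\subseteq\bigcup_{x\in A_m}B_m^o(x)$, an uncountable union of $\mu$-null sets.

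To collapse this uncountable cover to a null set I would peel off a full-measure set on which the orbit metric is separable. By Proposition \ref{lepeq} the system $(X,T)$ is $\mu$-equicontinuous, so Theorem \ref{3i1} (implication $1)\Rightarrow 3)$) furnishes $X'\subseteq X$ with $\mu(X')=1$ and $(X',d_o)$ separable, hence Lindel\"of. The $d_o$-open balls $\{\,y\in X':d_o(x,y)<1/m\,\}$, indexed by $x\in A_m\cap X'$, form an open cover of $A_m\cap X'$ (each point lies in its own ball), so it admits a countable subcover coming from some $\{x_k\}_{k\in\mathbb{N}}\subseteq A_m$. Each such ball is contained in $B_m^o(x_k)$, which is measurable (a countable intersection of balls) and $\mu$-null, so
\[
\mu(A_m)=\mu(A_m\cap X')\le\mu\Bigl(\bigcup_k B_m^o(x_k)\Bigr)\le\sum_k\mu(B_m^o(x_k))=0 .
\]
Intersecting over $m$ yields $\mu(LP(T))=1$, i.e. $(X,T)$ is $\mu-LP$.

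The step I expect to carry the weight is the last one: orbit balls need not be open in $(X,d)$ and $(X,d_o)$ need not be separable, so the passage from ``every orbit ball $B_m^o(x)$ with $x\in A_m$ is null'' to ``$A_m$ is null'' genuinely uses $\mu$-equicontinuity through Theorem \ref{3i1} together with the Lindel\"of property of separable metric spaces. Everything else — the reduction to fixed $m$, Borel measurability of $A_m$, and the use of $T$-invariance to turn Lemma \ref{limlep} into $\mu(B_m^o(x))=0$ — is bookkeeping.
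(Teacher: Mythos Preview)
Your proof is correct and follows essentially the same route as the paper's: both invoke Proposition~\ref{lepeq} and Theorem~\ref{3i1} to obtain a full-measure $X'$ on which $d_o$ is separable, both use Lemma~\ref{limlep} together with $T$-invariance to conclude $\mu(B_m^o(x))=0$ for the relevant points, and both pass from null orbit balls to a null exceptional set via separability. Your version is organized slightly differently (you fix $m$ first and treat each $A_m$ separately, while the paper works directly with $LEP(T)\setminus LP(T)$ and picks $m$ depending on $x$) and is more explicit about the Lindel\"of step that the paper leaves implicit, but the substance is the same.
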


\begin{proof}
Since $(X,T)$ is $\mu-$equicontinuous we have there exists $X^{\prime}\subset
X$ such that $X^{\prime}$ is $d_{o}-$separable and $\mu(X^{\prime})=1$
(Theorem \ref{3i1})$.$ If $x\in LEP(T)\diagdown LP(T)$ then by Lemma
\ref{limlep} there exists $m$ such that $T^{n}\mu(B_{m}^{o}(x))\rightarrow$
$0.$ Using the fact that $\mu$ is an invariant measure we get that $\mu
(B_{m}^{o}(x))=T^{n}\mu(B_{m}^{o}(x))=0.$ This implies $\mu(X^{\prime}\cap
LEP(T)\diagdown LP(T))=0.$ Therefore $\mu(LP(T))=1.$
\end{proof}

\begin{theorem}
Let $X$ be a Cantor space, $(X,T)$ a TDS and $\mu$ an invariant probability
measure. If $(X,T)$ is $\mu-LEP$ then $(X,T,\mu)$ has discrete rational spectrum.
\end{theorem}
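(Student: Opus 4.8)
The plan is to reduce to the already-established fact that $\mu-LEP$ together with invariance gives $\mu-LP$ (Proposition \ref{invlp}), and then to build an explicit orthonormal eigenbasis of $L^{2}(X,\mu)$ out of locally periodic behaviour, with all eigenvalues roots of unity. First I would apply Proposition \ref{invlp} to replace $LEP$ by $LP$: there is a full-measure invariant set on which every point lies in $LP(T)=\cap_m LP_m(T)$, and by Remark \ref{cantor} (we are on a Cantor space) this says exactly that for each $m$ the coordinate word $(T^i x)_{W_m}$ is eventually periodic in $i$ — in fact periodic, since $x\in LP_m$. Next, applying Lemma \ref{definitiony} with $LP$ in place of $LEP$ (the periods are bounded on a set of measure $>1-\varepsilon$), for each $m$ I get a measurable partition of a full-measure set according to the value of $p_m(x)$, the minimal period of the $W_m$-word, into pieces on which $p_m$ is constant; call $X_{m,p}=\{x: p_m(x)=p\}$. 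Invariance of $\mu$ and of the period function under $T$ makes each $X_{m,p}$ a $T$-invariant set.

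Then I would analyze $T$ restricted to $X_{m,p}$. On this set the observable $\phi_{m}(x):=x_{W_m}\in\mathcal{A}^{|W_m|}$ satisfies $\phi_m(T^p x)=\phi_m(x)$ and $\phi_m(T^i x)$ runs over a cycle of exact length $p$. Hence the function $x\mapsto \phi_m(x)$ (more precisely, the indicator functions $1_{\{x: x_{W_m}=w\}}$ for each word $w$) generates, under the Koopman operator $U_T$, a finite-dimensional $U_T$-invariant subspace of $L^2(X_{m,p},\mu)$ on which $U_T$ is a cyclic permutation of order dividing $p$; diagonalizing that finite permutation gives eigenfunctions with eigenvalues that are $p$-th roots of unity. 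Letting $m\to\infty$, the clopen cylinder sets $\{x: x_{W_m}=w\}$ generate the Borel $\sigma$-algebra of the Cantor space, hence the collection of all such indicator functions is total in $L^2(X,\mu)$; therefore the union over $m$ and $p$ of these finite-dimensional eigenspaces spans a dense subspace of $L^2(X,\mu)$. Applying Gram–Schmidt within each finite eigenspace (eigenfunctions for distinct eigenvalues are already orthogonal) produces an orthonormal basis of eigenfunctions, and every eigenvalue appearing is a root of unity. That is precisely discrete rational spectrum.

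The main obstacle I expect is bookkeeping rather than a genuine conceptual difficulty: one must make sure the sets $X_{m,p}$ are genuinely measurable and $T$-invariant (this needs that $p_m(\cdot)$ is a measurable, $T$-invariant function, which follows from Lemma \ref{definitiony} and the fact that $x\in LP_m$ means $p_m(x)$ is literally a period, not just an eventual period — here is where one really uses $LP$ rather than $LEP$, and hence where Proposition \ref{invlp} is essential), and one must check that the span of the cylinder-indicator functions, assembled from countably many finite-dimensional blocks, is dense in $L^2(X,\mu)$ — this is immediate since clopen cylinders generate the topology of a Cantor space and simple functions over them are dense. A secondary point to handle carefully is that the same eigenvalue (say $1$, or a fixed root of unity) occurs in infinitely many blocks, so the final orthonormalization is a countable Gram–Schmidt across all blocks contributing a given eigenvalue; this is routine. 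No ergodicity is needed beyond what is stated, though ergodicity would additionally force each $X_{m,p}$ to be trivial and thus each $p_m$ to be a.e. constant, which would streamline the argument; I would mention this but not rely on it.
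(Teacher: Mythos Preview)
Your proposal is correct and follows essentially the same strategy as the paper: invoke Proposition~\ref{invlp} to pass from $\mu$-LEP to $\mu$-LP, build finite-dimensional $U_T$-invariant subspaces on which some power $U_T^{p}$ is the identity (so all eigenvalues are roots of unity), and then check these subspaces are total in $L^{2}(X,\mu)$ using that cylinders generate the Cantor topology.

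The packaging differs slightly. The paper works with \emph{orbit balls}: for a fixed $y\in LP(T)$ the indicators $1_{B_m^o(T^j y)}$, $0\le j<p_m(y)$, are literally cyclically permuted by $U_T$ (this uses Remark~\ref{cantor}), and an explicit discrete Fourier transform produces the eigenfunctions $f_{m,y,k}$; density is then obtained by covering an arbitrary metric ball $B_n(x)$ by disjoint orbit balls $B_n^o(y_i)$. You instead partition first into the $T$-invariant level sets $X_{m,p}=\{p_m=p\}$ and use cylinder indicators restricted there; on $X_{m,p}$ one has $U_T^{p}=\mathrm{id}$ on the subspace they generate, and density follows directly because cylinders form a base. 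Your density step is marginally cleaner; the paper's cyclic-permutation picture is more explicit. One phrasing to tighten: on your generated subspace $U_T$ is not a ``cyclic permutation'' (the image of a single cylinder indicator under $U_T$ need not be a cylinder indicator, since $(Tx)_{W_m}$ may depend on coordinates outside $W_m$); what you actually need, and what your argument gives, is just $U_T^{p}=\mathrm{id}$ there, which suffices for diagonalization with $p$-th roots of unity as eigenvalues.
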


\begin{proof}
Using Proposition \ref{invlp} we have that $(X,T)$ is $\mu-LP.$

Let $m\in\mathbb{N},$ $y\in LP(T)$ and $i=\sqrt{-1}.$ We define $\lambda
_{m,y}:=e^{2\pi i/p_{m}(y)}\in\mathbb{C}$ and
\[
f_{m,y,k}:=\sum_{j=0}^{p_{m}(y)-1}\lambda_{m,y}^{j\cdot k}\cdot1_{B_{m}%
^{o}(T^{j}y)}\in L^{2}(X,\mu).
\]
Using the fact that $(X,T)$ is $\mu-LP,$ $y\in LP(T)$ and $X$ is a Cantor
space (see Remark \ref{cantor}) we have that $B_{m}^{o}(T^{p_{m}(y)}%
y)=B_{m}^{o}(y)$ and%
\[
U_{T}1_{B_{m}^{o}(T^{j}y)}=\left\{
\begin{array}
[c]{cc}%
1_{B_{m}^{o}(T^{j-1}y)}\text{ \ } & \text{if }1\leq j<p_{m}(y)-1\\
1_{B_{m}^{o}(T^{p_{m}(y)-1}y)}\text{ } & \text{if }j=0
\end{array}
.\right.
\]
%

\begin{figure}[ptb]%
\centering
\caption{An LEP point on a Cantor set. }%
\includegraphics[
natheight=4.990000in,
natwidth=6.833800in,
height=3.5129in,
width=4.8049in
]%
{lep.tif}%
\end{figure}

This implies that
\begin{align*}
U_{T}f_{m,y,k} &  =\sum_{j=0}^{p_{m}(x)-1}\lambda_{m,y}^{j\cdot k}%
\cdot1_{B_{m}^{o}(T^{j-1}y)}\\
&  =\sum_{j=0}^{p_{m}(y)-1}\lambda_{m,y}^{(j+1)\cdot k}\cdot1_{B_{m}^{o}%
(T^{j}y)}\\
&  =\lambda_{m,y}^{k}f_{m,y,k}%
\end{align*}
Thus $f_{m,y,k}$ is an eigenfunction corresponding to the eigenvalue
$\lambda_{m,y}^{k},$ which is a complex root of unity$.$

Considering that
\[
\sum_{k=0}^{p_{m}(y)-1}\lambda_{m,y}^{j\cdot k}=\left\{
\begin{array}
[c]{cc}%
0 & \text{if }j>0\\
p_{m}(y) & \text{if }j=0
\end{array}
\right.  ,
\]
we obtain
\[
\frac{1}{p_{m}(y)}\sum_{k=0}^{p_{m}(y)-1}f_{m,y,k}=1_{B_{m}^{o}(y)}.
\]
This means $1_{B_{m}^{o}(y)\text{ }}\in Span\left\{  f_{m,y,k}\right\}  _{k}.$

Let $n\in\mathbb{N},$ and $x\in X.$ Since $(X,T)$ is $\mu-$ LP and $X$ is a
Cantor set there exists a sequence $\left\{  y_{i}\right\}  \subset LP(T)$
such that $B_{n}^{o}(y_{i})$ are disjoint and $\mu(B_{n}(x))=$ $\mu(\cup
B_{n}^{o}(y_{i})).$ This means $1_{B_{n}(x)}$ can be approximated in $L^{2}$
by elements in
\[
Span\left\{  1_{B_{m}^{o}(y)}:m\in\mathbb{N}\text{ and }y\in LP(T)\right\}  .
\]
Since the closure of $Span\left\{  1_{B_{n}(x)}:n\in\mathbb{N}\text{, }x\in
X\right\}  $ is $L^{2}(X,\mu)$ we conclude the closure of $Span\left\{
f_{m,y,k}\right\}  _{m,y,k}$ is $L^{2}(X,\mu).$
\end{proof}

\begin{corollary}
Let $X$ be a Cantor space, $(X,T)$ a TDS and $\mu$ an ergodic probability
measure. If $(X,T)$ is $\mu-LEP$ then $(X,T,\mu)$ is isomorphic to an odometer
\end{corollary}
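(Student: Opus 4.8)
The plan is to combine the preceding theorem (ergodic $\mu$-$LEP$ TDS on a Cantor space have discrete rational spectrum) with the classical Halmos--von Neumann representation theorem and the structure theory of odometers. First I would invoke the theorem just proved: since $(X,T)$ is $\mu$-$LEP$ with $\mu$ ergodic and invariant, $(X,T,\mu)$ has discrete spectrum and all eigenvalues are complex roots of unity. By the Halmos--von Neumann theorem (quoted in the excerpt), an ergodic MPT with discrete spectrum is measurably isomorphic to a rotation on a compact metrizable abelian group $G$, namely the Pontryagin dual of the (countable) group of eigenvalues, with the rotation by a generating element and Haar measure.

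Next I would identify which compact group rotations arise. The eigenvalue group here is a subgroup of the roots of unity $\mathbb{Q}/\mathbb{Z}$ (equivalently, of $\bigcup_n \mathbb{Z}_n$), so it is a countable torsion abelian group all of whose elements have finite order. Such a group is a direct limit of finite cyclic groups, and its Pontryagin dual $G$ is exactly a procyclic group $\varprojlim \mathbb{Z}/n_k\mathbb{Z}$ — i.e. a group of the form $\prod_i \mathbb{Z}_{s_i}$ for a suitable (finite or infinite) sequence $(s_i)$ — with the rotation being the $+(1,0,\dots)$ map with carrying. That is precisely the $S$-adic odometer of Example \ref{odom}. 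Concretely: ergodicity forces the rotation element to generate a dense subgroup, so $G$ is monothetic and torsion-dual, hence procyclic, hence an odometer group; and the Haar-measure-preserving rotation is the odometer map. Thus $(X,T,\mu)$ is measurably isomorphic to an odometer.

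The main obstacle — really a bookkeeping point rather than a deep one — is pinning down the precise correspondence "countable torsion subgroup of $\mathbb{Q}/\mathbb{Z}$ $\longleftrightarrow$ odometer group'' and making sure the rotation on the dual is genuinely the odometer addition map and not merely some rotation on a procyclic group. This requires recalling that a compact monothetic group whose character group is torsion is an inverse limit of finite cyclic groups, that any such inverse limit is isomorphic as a topological group to some $\prod_i \mathbb{Z}_{s_i}$, and that under this identification the image of $1$ under the dense embedding $\mathbb{Z}\hookrightarrow G$ is the element $(1,0,0,\dots)$, so translation by it is the carrying map. One should also note the eigenvalue group is necessarily infinite (otherwise $L^2$ would be finite-dimensional and the system a finite rotation, which is the degenerate odometer with finitely many $s_i$), so this is consistent with allowing $S$ finite or infinite in Example \ref{odom}. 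All the ergodic-theoretic content is already in the previous theorem and the Halmos--von Neumann theorem; the remaining work is the group-theoretic identification, which is standard.
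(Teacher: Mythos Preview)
Your proposal is correct and takes essentially the same approach as the paper: invoke the preceding theorem to obtain discrete rational spectrum, then use the Halmos--von Neumann theorem together with the fact that odometers realize every countable subgroup of the roots of unity as an eigenvalue group. The paper's proof is a two-line sketch asserting exactly these two ingredients (``the set of odometers cover all the possible rational spectrums'' and ``two MPT with discrete spectrum are isomorphic if and only if they are spectrally isomorphic''); you have simply unpacked the first assertion via Pontryagin duality and the procyclic structure of the dual, which is the standard justification the paper leaves implicit.
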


\begin{proof}
The set of odometers cover all the possible rational spectrums we obtain the
following corollary. Two MPT with discrete spectrum are isomorphic if and only
if they are spectrally isomorphic.
\end{proof}

This result is particularly useful for cellular automata, as $\mu-LEP$
invariant measures appear naturally as the limit measures of $\mu
-$equicontinuous CA; for more information see \cite{mueqca}.

\subsection{$\mu-$Sensitivity}

The following notion of measure theoretical sensitivity was introduced in
\cite{Cadre2005375}, and it was characterized in \cite{mtequicontinuity}. We
provide another characterization.

\begin{definition}
[\cite{Cadre2005375}\cite{mtequicontinuity}]Let $(X,T)$ be a TDS and $\mu$ an
invariant measure. We define the set $S(\varepsilon):=\left\{  (x,y)\in
X^{2}:\exists n>0\text{ such that }d(T^{n}x,T^{n}y)\geq\varepsilon\right\}  .$

We say $(X,T)$ is $\mu-$\textbf{sensitive} (or $\mathbf{\mu-}$\textbf{pairwise
sensitive}) if there exists $\varepsilon>0$ such that $\mu\times
\mu(S(\varepsilon))=1.$
\end{definition}

In \cite{mtequicontinuity} it was shown that if $\mu$ is ergodic then either
$(X,T)$ is $\mu-$sensitive or $\mu-$equicontinuous.

\begin{theorem}
\label{musens}Let $(X,\mu)$ be a Vitali space that satisfies Lebesgue's
density theorem $(X,T)$ be a TDS and $\mu$ an ergodic measure. Then $(X,T)$ is
$\mu-$sensitive if and only if $X$ contains no $\mu-$equicontinuity points.
\end{theorem}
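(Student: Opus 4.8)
The plan is to use the dichotomy of Huang–Li–Ye already recalled in the text: for an ergodic measure $\mu$, the system $(X,T)$ is either $\mu-$sensitive or $\mu-$equicontinuous. So it suffices to show, under the hypothesis that $(X,\mu)$ is Vitali and satisfies Lebesgue's density theorem, that $(X,T)$ is $\mu-$equicontinuous if and only if $X$ contains at least one $\mu-$equicontinuity point. The forward direction is immediate: if $(X,T)$ is $\mu-$equicontinuous then, by Theorem \ref{3i1} (the implication $1)\Rightarrow 2)$, which needs Lebesgue's density theorem), almost every $x\in X$ is a $\mu-$equicontinuity point, and in particular there is at least one (note $\mu(X)=1>0$).

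For the reverse direction I would argue by contrapositive: assume $(X,T)$ is not $\mu-$equicontinuous and show there are no $\mu-$equicontinuity points at all. By the Huang–Li–Ye dichotomy, $(X,T)$ is $\mu-$sensitive, so there is some $\varepsilon>0$ with $\mu\times\mu(S(\varepsilon))=1$. Fix $m$ with $1/m<\varepsilon$. The key observation is that for any $x$, the set $\{y:(x,y)\notin S(\varepsilon)\}$ contains $B_m^o(x)$ (if $d(T^ix,T^iy)\le 1/m<\varepsilon$ for all $i$ then $(x,y)\notin S(\varepsilon)$). Now use Fubini on the full-measure set $S(\varepsilon)$: for $\mu-$a.e. $x$, the fiber $S(\varepsilon)_x=\{y:(x,y)\in S(\varepsilon)\}$ has $\mu(S(\varepsilon)_x)=1$, hence $\mu(B_m^o(x))=0$ for a.e. $x$. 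This says the orbit ball $B_m^o(x)$ is $\mu-$null for a.e. $x$, which forces the limit in the definition of $\mu-$equicontinuity point to be $0$ rather than $1$ at every point of the support — but I must be careful here, since the a.e. statement is about a.e. $x$, whereas I want to rule out \emph{every} $x$ being a $\mu-$equicontinuity point.

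To handle that gap I would use ergodicity and invariance of the orbit-ball structure under $T$, in the spirit of Lemma \ref{limlep} / Proposition \ref{invlp}: suppose $x_0$ were a $\mu-$equicontinuity point, so $\mu(B_m^o(x_0)\cap B_n(x_0))/\mu(B_n(x_0))\to 1$ for all $m$; in particular $\mu(B_m^o(x_0))>0$ for all $m$. Consider $U_m:=\{x: \mu(B_m^o(x))>0\}$; one checks (via Fubini, as in Lemma \ref{countable}) that the function $x\mapsto \mu(B_m^o(x))$ is measurable, so $U_m$ is measurable. Since $T^{-1}B_m^o(Tx)\subset B_m^o(x)$ (orbit balls are "decreasing under $T^{-1}$" up to the first coordinate), one can show $U_m$ is, up to null sets, $T$-invariant; but we just showed $\mu(U_m)=0$ from $\mu-$sensitivity, contradicting (by ergodicity, if $\mu(U_m)$ were positive it would be $1$) the existence of $x_0$ with positive orbit-ball measure — unless $x_0$ lies outside the support, in which case $\mu(B_n(x_0))=0$ for large $n$ and the defining quotient is undefined, so $x_0$ is vacuously not a genuine $\mu-$equicontinuity point in the support sense. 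Thus no $\mu-$equicontinuity point exists, completing the contrapositive.

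The main obstacle I anticipate is exactly this last step: converting the "$\mu(B_m^o(x))=0$ for a.e. $x$" conclusion into "no $x$ is a $\mu-$equicontinuity point." The cleanest route is probably to observe directly that a $\mu-$equicontinuity point $x_0$ must satisfy $\mu(B_m^o(x_0))>0$ and lie in the support, and then to show the set of such $x$ is $T$-invariant of measure zero yet would have to be nonempty — or, more simply, to invoke Theorem \ref{3i1} again: if even one $\mu-$equicontinuity point existed together with the Vitali hypothesis, Lemma \ref{countable} and the $2)\Rightarrow 1)$ implication would let one upgrade a positive-measure set of $\mu-$continuity points of $f_T$ to $\mu-$Lusin-ness of $f_T$ on a set of positive measure, and then ergodicity promotes this to $\mu-$equicontinuity, contradicting $\mu-$sensitivity. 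I would likely present the argument in this second form, as it reuses the machinery already built and avoids a delicate hands-on invariance computation.
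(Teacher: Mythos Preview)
Your overall architecture matches the paper's: use the Huang--Li--Ye dichotomy together with Theorem~\ref{3i1} to reduce everything to the implication ``there exists a $\mu$-equicontinuity point $\Longrightarrow$ $(X,T)$ is not $\mu$-sensitive.'' You also correctly isolate the key fact that any $\mu$-equicontinuity point $x_0$ satisfies $\mu(B_m^o(x_0))>0$ for every $m$. The gap is that you then take a detour that does not close.

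Neither of your two proposed routes works. In the first, you show $U_m=\{x:\mu(B_m^o(x))>0\}$ is $T$-invariant up to null sets and has $\mu(U_m)=0$; but $\mu(U_m)=0$ does not prevent a single point $x_0$ from lying in $U_m$, so no contradiction is reached (being in the support does not help here). In the second, you want to feed a ``positive-measure set of $\mu$-continuity points'' into Lemma~\ref{countable} and Theorem~\ref{tfae}, but the existence of one $\mu$-equicontinuity point does not produce such a set; both results require $\mu$-continuity \emph{almost everywhere}.

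The paper avoids all of this with a one-line product argument, which you were one step away from: since $\mu(B_m^o(x_0))>0$, the set $B_m^o(x_0)\times B_m^o(x_0)$ has positive $\mu\times\mu$ measure, and for any $(y,z)$ in it one has $d(T^iy,T^iz)\le 2/m$ for all $i$, so this set lies in the complement of $S(2/m)$. Hence $\mu\times\mu(S(\varepsilon))<1$ for every $\varepsilon>0$, and $(X,T)$ is not $\mu$-sensitive. Note this direction uses neither the Vitali hypothesis, nor Lebesgue's density theorem, nor ergodicity; those are needed only for the other implication via Theorem~\ref{3i1} and the dichotomy.
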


\begin{proof}
Let $E_{\mu}$ be the set of $\mu-$equicontinuity points.

By Theorem \ref{3i1} and the previous comment we have that $T$ is $\mu
-$sensitive or $E_{\mu}$ has measure one.

If $E_{\mu}=\varnothing$ then $(X,T)$ is $\mu-$sensitive.

On the other hand note that if $x\in E_{\mu}$ then for all $\varepsilon=1/m>0$
we have that $\mu(B_{m}^{o}(x))>0.$ This means that $\mu\times\mu
(S(\varepsilon))<1$ for all $\varepsilon>0.$
\end{proof}

\bibliographystyle{aabbrv}
\bibliography{camel}

\end{document}